\documentclass[12pt]{amsart}
\usepackage{color}
\usepackage{xypic}
\usepackage[colorlinks=true,urlcolor=blue,linkcolor=blue,citecolor=blue]{hyperref}
\usepackage{ amssymb }
\usepackage{amsthm}
\usepackage{thmtools}

\numberwithin{equation}{section}
\theoremstyle{plain}
\newtheorem{theorem}{Theorem}[section]

\newtheorem{lemma}[theorem]{Lemma}
\newtheorem{cor}[theorem]{Corollary}

\theoremstyle{definition}
\newtheorem{conjecture}{Conjecture}
\newtheorem{definition}[theorem]{Definition}
\newtheorem{exa}[theorem]{Example}

\newtheorem{que}[theorem]{Question}

\newcommand{\st}{\ :\ }

\newcommand{\A}{\mathbf{A}}
\newcommand{\B}{\mathbf{B}}

\newcommand{\D}{\mathbf{D}}
\newcommand{\F}{\mathbf{F}}
\newcommand{\K}{\mathbf{K}}
\renewcommand{\L}{\mathbf{L}}

\newcommand{\N}{\mathbb{N}}
\newcommand{\R}{\mathbf{R}}
\renewcommand{\S}{\mathbf{S}}
\newcommand{\U}{\mathbf{U}}
\newcommand{\V}{\mathcal{V}}
\newcommand{\Z}{\mathbb{Z}}

\newcommand{\Clo}{\mathrm{Clo}}
\newcommand{\Con}{\mathrm{Con}}

\newcommand{\rk}{\mathrm{rk}}

\newcommand{\Cg}{\mathrm{Cg}}
\newcommand{\SMP}{\text{SMP}}

\title[2-Nilpotent Mal'cev Algebras]{Structure and Complexity of 2-Nilpotent Mal'cev Algebras}
\date{\today}
\author[P. Wynne]{Patrick Wynne}
\address{Department of Algebra \\
Charles University \\Prague \\ Czechia}
\email{wynnepm@gmail.com}

\thanks{The research  was partially supported by Charles University under grant number PRIMUS/24/SCI/008 and by Charles University Research Centre
program No. UNCE/24/SCI/022.}

\subjclass{08A40, 08A02}

\keywords{closed function classes, clones, clonoids, nilpotent Mal'cev algebras, central extensions, subpower membership problem}

\begin{document}

\begin{abstract}
We investigate the structure of central extensions for algebras in a congruence modular variety.
We use a multisorted algebraic object called a clonoid to understand the term clone 
  of such a central extension. 
We develop the difference clonoid of such a central extension
  and use it to show that 
  the number of $2$-step nilpotent algebras on a fixed finite set is finite
  if and only if the set is of squarefree order.

The subpower membership problem for a finite algebraic structure $\A$ is the problem of deciding 
  on input $a_1,\dots,a_k, b \in A^n$, whether $b$ is in the subalgebra of $\A^n$ generated 
  by $a_1, \dots, a_k$.
We show that for a large class of nilpotent Mal'cev algebras the subpower membership problem 
  is solvable in polynomial time, in particular for $2$-step nilpotent Mal'cev algebras 
  of squarefree order.
\end{abstract} 

\maketitle

%%%%%%%%%%%%%%%%          Section 1:   Introduction          %%%%%%%%%%%%%%%

\section{Introduction}

Deciding whether a given element lies in the substructure generated by a given set of generators
  is one of the oldest and most fundamental questions in algebra.
For vector spaces, this is the question of deciding linear dependence.
For groups this is the famous subgroup membership problem.
Similarly, in polynomial rings the ideal membership problem asks if a polynomial 
  is in the ideal generated by a given set of polynomials. 
In general algebraic structures (or algebras for short)
  the subalgebra membership problem asks to decide if an element of the algebra 
  is obtained by a set of generators using the basic operations of the algebra.
Such membership problems sit at the meeting point between structure and computation;
  naturally, the complexity of the decision problem reflects the structural properties of the algebra.

We consider a variant of the subalgebra membership problem in which we must decide whether a given target tuple 
  lies in the subalgebra of a finite power of an algebra generated by a given set of tuples.
The subpower membership problem for an algebraic structure $\A$, denoted $\SMP(\A)$, 
  is the following computational problem.
On input $k,n \in \N$ and $a_1,\ldots,a_k,b \in A^n$, determine if $b$ is in the subalgebra of $\A^n$ 
  generated by $\{ a_1, \ldots, a_k \}$.
Equivalently, the subpower membership problem is the problem of deciding if 
  a finitary partial function on $A$ can be interpolated by a term function of $\A$. 
For vector spaces, this problem is solvable in polynomial time via Gaussian elimination.
In \cite{Si:CMPG} Sims shows that there is a polynomial time algorithm to determine 
 if a given set of permutations on a finite set generate another given permutation, 
 showing that the subgroup membership problem is tractable (i.e. solvable in polynomial time).
A modification of Sims' method gives a polynomial time algorithm for the subpower membership problem for finite groups. 
For a finite algebra $\A$ of finite type (i.e. with finitely many basic operations), 
  the subpower membership problem problem is decidable in exponential time by enumerating 
  all elements of the subalgebra of $\A^n$ generated by $a_1, \ldots, a_k$, 
  of which there are at most $|A|^n$. 
In \cite{Ko:EXP}, Kozik gives an example of a finite algebra $\A$ of finite type
  such that $\SMP(\A)$ is $\textup{EXPTIME}$-complete, 
  that is, the naive algorithm cannot be improved upon.

Generalizing classical algebraic structures such as groups, rings, modules, and Lie algebras, 
  a Mal'cev algebra is an algebra with a ternary term operation $m(x,y,z)$ satisfying the identities 
  $m(x,y,y) = x = m(y,y,x)$.
In \cite{IMMVW:TL} it is asked whether every finite Mal'cev algebra 
  (more generally, every finite algebra with few subpowers) 
  of finite type has tractable subpower membership problem.
The motivation is a connection to Constraint Satisfaction Problems over certain constraint languages 
  where it is convenient to represent constraint relations by generators.

\begin{que}[\cite{IMMVW:TL} Question 2] \label{que:SMP} 
  Is $\textup{SMP}(\A) \in \textup{P}$ for every finite Mal'cev algebra $\A$ of finite type?
\end{que}

Willard \cite{Wi:SMP} showed that for any expansion of a finite group by multilinear operations, 
  the subpower membership problem is tractable.
In particular it is tractable for finite rings, modules, and $K$-algebras.
In \cite{Ma:SMP}, Mayr shows that for $\A$ a finite Mal'cev algebra of finite type, 
  $\SMP(\A) \in \text{NP}$, and in \cite{BMS:SMP} Bulatov, Mayr, and Szendrei 
  extend this result to show that $\SMP(\A) \in \text{NP}$ for $\A$ a finite algebra 
  of finite type with a \emph{cube term} (equivalently, with few subpowers).
Further they show in \cite{BMS:SMP} that the subpower membership problem is tractable 
  for every finite algebra of finite type with a cube term that generates a residually finite variety. 

Generalizing the concept of nilpotence from classical algebraic structures such as groups and rings,
  the study of nilpotent algebras has been a fecund area of research.
Since finite nilpotent algebras of finite type need not generate residually finite varieties in general,
  this makes them a natural class for which to continue the investigation of Question~\ref{que:SMP}.
A finite group is nilpotent if and only it factors into the direct product of groups of prime power order.
Nilpotent algebras in congruence modular varieties that factor into a direct product of nilpotent algebras
  of prime power order are called supernilpotent.
For a finite supernilpotent Mal'cev algebra $\A$ of finite type, Mayr shows  in \cite{Ma:SMP} that $\SMP(\A) \in \text{P}$. 
However, his methods are not sufficient to show tractability for those finite nilpotent Mal'cev algebras 
  of finite type that are not supernilpotent.
In \cite{Va:NPV} Vaughn-Lee gives an example of a $12$ element $2$-step nilpotent loop 
  that does not factor into the direct product of loops of prime power order, and thus is not supernilpotent. 
Nevertheless, in \cite{Ma:VLL} Mayr shows that this loop has tractable subpower membership problem.
This leads us to the following sub-question of Question~\ref{que:SMP}.

\begin{que} \label{que:SMPnilpotent}
Is $\textup{SMP}(\A) \in \textup{P}$ for every finite $2$-step nilpotent Mal'cev algebra of finite type?
\end{que}

In \cite{Kom:SMP}, Kompatscher makes progress on Question~\ref{que:SMPnilpotent} by showing the following.
\begin{theorem}[\cite{Kom:SMP} Theorem 20] 
  Let $\A$ be a finite Mal'cev algebra with a central series $0_\A < \rho < 1_\A$ such that $|A/\rho| = p$ is a prime 
  and the congruence blocks of $\rho$ are of size coprime to $p$. 
  Then $\textup{SMP}(\A) \in \textup{P}$.
\end{theorem}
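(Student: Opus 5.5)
The plan is to represent $\A$ as a central extension of the $p$-element algebra $\A/\rho$ by the abelian structure on a $\rho$-block. Then I would reduce membership in $\A^n$ to two linear-algebra problems over coprime moduli. Since $0_\A<\rho<1_\A$ is central and $\A$ is Mal'cev, $\A/\rho$ is abelian, hence affine. With $|A/\rho|=p$ prime, $\A/\rho$ is polynomially equivalent to a module over $\Z_p$. Likewise, $\rho$ being central makes every $\rho$-block an affine space over a module $\mathbf{M}$, with $|M|=m$ coprime to $p$. Fixing a transversal, I would identify $A$ with $\Z_p\times M$ and write every $k$-ary term operation as
\[
t(x,y)=\bigl(f(x),\ \textstyle\sum_i \alpha_i y_i + g(x)\bigr),
\]
where $f$ is an affine map on $\Z_p^k$, the $\alpha_i$ are endomorphisms (or scalars) of $M$, and $g\colon \Z_p^k\to M$ is an arbitrary-looking ``difference'' function. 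This decomposition is the standard description of the clone of a central extension in a congruence modular variety.

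The key structural step is to show that the possible functions $g$ are very constrained. Because $\gcd(p,m)=1$, the functions $\Z_p^k\to M$ arising this way should be determined by their values on a small set. I would use that $M$ has no $p$-torsion, so averaging over $\Z_p$-translates is possible, since $p$ is invertible on $M$. From this I would aim to show that $g$ is a sum of functions each depending on boundedly many coordinates (essentially each $g$ is a $\Z$-combination of functions of $\le d$ affine forms, with $d$ bounded by a constant depending on $\A$). Equivalently, terms of $\A$ restricted to the generated subalgebra behave like ``affine in $y$, bounded-arity in $x$''.

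Given the generators $a_1,\dots,a_k$ and the target $b$, I would proceed in two stages.
\begin{enumerate}
\item Decide whether the first components of $b$ lie in the $\Z_p$-affine span of the first components of the $a_i$. This is Gaussian elimination over $\Z_p$ and yields the affine solution space $L$ of possible $x$-combinations.
\item Decide whether some term realises the second component of $b$. I would build a polynomial-size generating set of $M^n$-vectors consisting of all $g$-contributions. Using the bounded-arity description, each such contribution is determined by applying bounded-arity functions to the $\Z_p$-coordinates, so there are only polynomially many. Then I would solve a linear system over the ring $\Z_m$, or $\mathrm{End}(M)$, via Smith-normal-form methods, subject to the first-component constraint $L$.
\end{enumerate}
The coprimality is what lets the two problems decouple. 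Since $p$ is invertible mod $m$, any $\Z_p$-affine constraint can be adjusted without destroying $M$-linear combinations.

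The main obstacle is the structural claim on $g$: that the $M$-valued parts of term operations form a module generated by polynomially many bounded-arity pieces whose linear span can be computed, and that every element of that span is actually realized by a term with the prescribed first component. For that step I would first try to prove that the set of $g$'s is closed under the operations induced by the Mal'cev term and under composition with affine $\Z_p$-maps. Then I would use the invertibility of $p$ in $\mathrm{End}(M)$ to show the span is generated by functions of the form $x\mapsto [\,\ell(x)=c\,]\cdot v$ for affine forms $\ell$, which makes the final linear-algebra step polynomial.
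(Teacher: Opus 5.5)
The paper does not prove this statement itself; it cites it and extends it. The statement is the special case $\U=\A/\rho$ (affine $\Z_p$), $\L$ a $\rho$-block, of Theorem~\ref{thm:SMPmain}, and your framework matches it. Your $g$'s are the distortions $\hat t$, and their freedom is measured by the difference clonoid $D=D(\L\otimes\U)$. Your bounded-arity claim even holds with $d=2$, since $D$ is generated by its binary members (Theorem~\ref{thm:distributive}).

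\textbf{First gap: the polynomial count.} Bounded arity does \emph{not} give polynomially many contributions. The affine forms you compose with are exponentially many in $k$, so the vectors $h(\ell_1(a),\ell_2(a))$ with $h\in D^{(2)}$ form an exponential family, and nothing in your plan picks out a polynomial spanning subfamily. The missing idea is to localize to the input.
\begin{enumerate}
\item Write $f\in D^{(k+1)}$ as $f(z,\dots,z)+\sum_N f'_N$, where $N$ runs over the $2$-dimensional subspaces of $\Z_p^{k+1}$ containing the diagonal, and $f'_N$ is $f-f(z,\dots,z)$ restricted to $N$ and extended by $0$.
\item The interpolation Lemmas~\ref{lem:malcev} and~\ref{lem:fdecomp} show that each $f'_N$ again lies in $D$. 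They use an $\S$-combination of $f'$ precomposed with rank-$\le 1$ matrices, which exists precisely because $p$ is invertible on $L$.
\item Distinct $N$ meet only in the diagonal, so each off-diagonal input column lies in exactly one $N_j$. Hence at most $n$ planes matter.
\item On $N_j$ one has $f'_{N_j}=g\circ q_{N_j}^{-1}$ with $g=f'\circ q_{N_j}\in D^{(2)}$. This gives the generating set of Lemma~\ref{lem:generators}, of size at most $|D^{(1)}|+n|D^{(2)}|$.
\end{enumerate}
Your substitute, generation by the functions $x\mapsto[\ell(x)=c]\cdot v$, cannot play this role. Since $p$ is invertible on $M$, these span \emph{all} maps $\Z_p^k\to M$: summing over the hyperplanes through $a$ gives a constant plus $p^{k-1}[x=a]$. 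So they carry no information about $D$, and such pieces of a member of $D$ need not lie in $D$. What you actually need is that support restrictions of members of $D$ stay in $D$.

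\textbf{Second gap: the decoupling.} Your two-stage decoupling is asserted rather than proved, and it is not a linear problem. Over a fixed $U$-component, the achievable $L$-components form a coset of a subgroup $K\le L^n$ that contains $D(a^U)$. But when the $a_i^U$ are affinely dependent, terms $t,t'$ with $t^{\U}\ne {t'}^{\U}$ and $t^{\U}(a^U)={t'}^{\U}(a^U)$ contribute distortion differences that are not of the form $e(a^U)$ with $e\in D$.

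For example, transport $\L'\times\U$ along $(\ell,u)\mapsto(\ell+\phi(u),u)$ for a nonconstant $\phi\colon U\to L$. Then $D=0$. Yet for $a_i=(0,u_i)$ with $u_3=2u_1-u_2$, the elements $d(a_1,a_2,a_1)$ and $a_3$ lie in the same fiber and differ in the $L$-component by $\phi(u_3)-2\phi(u_1)+\phi(u_2)$, which need not vanish. Invertibility of $p$ modulo $m$ does nothing about this.

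The paper does not handle this step by hand. It invokes Kompatscher's reduction (Theorem~\ref{thm:comprep}) of $\SMP(\L\otimes\U)$ to $\mathrm{CompRep}(D(\L\otimes\U))$ for supernilpotent $\U$, which applies here since $\A/\rho$ is abelian. You would need to cite that, or give an argument that accounts for these collisions.
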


In particular, $\SMP(\A) \in \text{P}$ where $\A$ is a finite nilpotent Mal'cev algebra of order $pq$ for primes $p \ne q$. 
We extend this result of Kompatscher to show that a large class of nilpotent Mal'cev algebras 
  that are not necessarily supernilpotent  have tractable subpower membership problem.
We call $\A \cong \L \otimes \U$ a central extension of $\L$ by $\U$. 
See Section~\ref{sec:centralextensions} for details.

\begin{restatable}{theorem}{smpthm}
     \label{thm:SMPmain} 
  Let $\A$ be a finite $2$-nilpotent Mal'cev algebra such that $\A \cong \L \otimes \U$ 
    where $\L$ and $\U$ are abelian Mal'cev algebras of coprime order and $\U$ is the direct product 
    of pairwise non-isomorphic simple Mal'cev algebras. 
  Then $\textup{SMP}(\A) \in \textup{P}$. 
\end{restatable}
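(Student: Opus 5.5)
The plan is to reduce $\SMP(\A)$ to a linear-algebra problem by describing the term clone of $\A \cong \L \otimes \U$ via its difference clonoid. By the central extension setup, every $k$-ary term operation of $\A$ has the form
\[
t\bigl((l_1,u_1),\dots,(l_k,u_k)\bigr) = \bigl(t^{\L}(l_1,\dots,l_k) + f(u_1,\dots,u_k),\ t^{\U}(u_1,\dots,u_k)\bigr),
\]
where $f$ ranges over a clonoid from $\U$ to the abelian group (module) underlying $\L$. Since $\L$ and $\U$ are abelian Mal'cev algebras, they are affine, so $t^{\L}$ and $t^{\U}$ are affine combinations over the respective rings. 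The first step is to determine this difference clonoid exactly under the hypotheses that $|L|$ and $|U|$ are coprime and $\U = \U_1\times\dots\times\U_m$ with the $\U_i$ simple and pairwise non-isomorphic. Coprimality should force the clonoid to be closed under every "summand" decomposition, since no nontrivial interaction between a $p$-group and a $q$-group can be carried by additive structure. Because the $\U_i$ are pairwise non-isomorphic simple affine algebras, the term operations of $\U$ act coordinatewise. I would therefore aim to show that the clonoid is generated, as a module, by functions of the form $u \mapsto g(\pi_i(u))$ composed with affine maps of $\U_i$. Concretely, I expect it to be a finite union of $\L$-submodules of functions that depend on only a bounded number of linear forms, namely one or two per factor.

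Given this description, the algorithm runs as follows. On input $a_1,\dots,a_k,b\in A^n$, first project to $\U^n$ and decide $b_{\U} \in \langle a_{1,\U},\dots,a_{k,\U}\rangle$. This is Gaussian elimination over the fields $\mathrm{End}(\U_i)$ for each simple factor, done separately because the factors are non-isomorphic. If the answer is yes, compute the affine space $S$ of all affine combinations $(\lambda_1,\dots,\lambda_k)$ realizing $b_{\U}$; this space has a polynomial-size description. It then remains to decide whether some term with $\U$-part in $S$ has $\L$-part equal to $b_{\L}$. That $\L$-part equals $\sum_j \mu_j a_{j,\L}$ plus $f$ evaluated on the $n$ columns $(a_{1,\U}(r),\dots,a_{k,\U}(r))$ for $r=1,\dots,n$. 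By the clonoid description, the set of possible values of $f$ on these $n$ input tuples is spanned by polynomially many vectors in $L^n$. These are obtained from functions of the linear forms on the at most $n$ distinct points, so the number of distinct "patterns" per factor is bounded by $n\cdot|U_i|$. So the condition becomes membership of $b_{\L}$ in a polynomial-size sum of submodules of $L^n$, again decidable by linear algebra over $\Z_{p^e}$, for example through Hermite or Smith normal form.

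The main obstacle is the coupling between the choice of $\U$-term and the available $f$. The difference clonoid need not be independent of $t^{\U}$; it is only a clonoid, so $f$ is attached to a particular term. One must therefore show that, after fixing the $\U$-part in $S$, the set of achievable $f$ is a coset of a fixed module that does not depend on which element of $S$ was chosen. I would try to prove this using coprimality: multiplying by the inverse of $|U|$ modulo the exponent of $L$, and using the Mal'cev term to take differences of terms with the same $\U$-part, the differences of achievable $f$'s form the clonoid itself. The hypothesis of pairwise non-isomorphic simple factors is what should make this clonoid generated by unary-in-each-linear-form functions. If isomorphic factors were allowed, functions depending jointly on two copies could occur and blow up the spanning set. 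Verifying that the clonoid is finitely generated in this bounded way is the step I expect to require the most work.
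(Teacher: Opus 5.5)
\paragraph{The missing step: a polynomial generating set for the clonoid image.}
The step the whole theorem rests on is one you leave as an expectation: computing, in polynomial time, a polynomial-size generating set for $D(\L\otimes\U)(\bar a)\le\L^n$, where $\bar a=(a_{1,\U},\dots,a_{k,\U})$. The mechanism you sketch does not deliver it.

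Your first guess, that the clonoid is generated by functions of a single factor $\U_i$, is false. Take $L=\Z_2$ and $U=\Z_3\times\Z_5$, and let $\A$ have $x-y+z$ together with $(l,u)\mapsto(l+e(u),u)$ for every $e\colon U\to L$ as basic operations. This $\A$ satisfies all the hypotheses. Its difference clonoid contains the indicator of a point of $U_1\times U_2$, and that indicator is not a sum of functions of the separate factor coordinates. So non-isomorphism of the factors does not rule out joint dependence.

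The correct weaker statement is that the clonoid is generated by its binary members (Theorem~\ref{thm:distributive}). This only says that $D(\L\otimes\U)(\bar a)$ is spanned by the vectors $g(\lambda_1(\bar a),\lambda_2(\bar a))$, with $g$ binary and $\lambda_1,\lambda_2$ ranging over $k$-ary term functions of $\U$. The tuples $\lambda(\bar a)$ run through the entire subpower $\langle a_{1,\U},\dots,a_{k,\U}\rangle_{\U^n}$, which can be exponentially large. So your count of $n\cdot|U_i|$ ``patterns'' has no basis.

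What is missing is the coprime interpolation of Lemmas~\ref{lem:dps}--\ref{lem:fdecomp}. Since $|U|$ is invertible on $\L$, every $(k+1)$-ary $f$ in the clonoid decomposes as $f(z,\dots,z)+\sum_{N\in V}f'_N$. Each $f'_N$ is a linear combination of minors $f'(r*_zx,z)$, hence again in the clonoid, and it vanishes outside a single $2$-dimensional submodule $N$ containing the diagonal. Moreover $f'_N|_N=g_Nq_N^{-1}$, where $g_N=f'q_N$ is binary and $q_N\colon U^2\to N$ is a bijection given by term functions. Distinct members of $V$ meet only in the diagonal, so each non-diagonal input column lies in exactly one $N_i$. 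Hence the vectors $b_{g,i}$ of Lemma~\ref{lem:generators}, together with the unary part, generate the image; there are boundedly many binary $g$ and at most $n$ column classes. This is where pairwise non-isomorphism is really used. It makes a polynomial reduct of $\U$ a cyclic module over $\F_1\times\cdots\times\F_\ell$, which yields both the parametrization by $U^2$ and the intersection property.

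\paragraph{The coupling step.}
You flag the coupling but leave it unresolved, and it is broader than you state. The distortion is determined, up to $D(\L\otimes\U)$, by the whole class $\xi(t)=(t^{\L'},t^{\U})$, not by $t^\U$ alone. So the $\L$-parts are not $\{\sum_j\mu_ja_{j,\L}\}$ plus an independently chosen element of $D(\L\otimes\U)(\bar a)$; you need actual lifts computed in $\A$ to fix the coset representatives.

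Your coprimality idea does work for the dependence on the choice inside $S$. Via the difference term, the relevant discrepancy is additive modulo $D(\L\otimes\U)(\bar a)$ on a group whose exponent is coprime to $|L|$, hence zero.

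The paper sidesteps all of this. Since $\U$ is abelian, hence supernilpotent, Kompatscher's reduction (Theorem~\ref{thm:comprep}) reduces $\SMP(\A)$ to $\mathrm{CompRep}(D(\L\otimes\U))$. The generating set from Lemma~\ref{lem:generators} is then thinned to a compact representation.
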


As a corollary we obtain the following.

\begin{restatable}{cor}{smpcor}
   \label{thm:SMPcorollary}
  Let $\A$ be a $2$-nilpotent Mal'cev algebra of squarefree order and finite type. 
  Then $\textup{SMP}(\A) \in \textup{P}$.
\end{restatable}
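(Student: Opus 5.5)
The plan is to derive the corollary from Theorem~\ref{thm:SMPmain} by checking that every $2$-nilpotent Mal'cev algebra $\A$ of squarefree order $n$ and finite type satisfies its hypotheses, after possibly splitting off a supernilpotent factor. Take a central series $0_\A \le \rho \le 1_\A$ with $\rho$ central and $\A/\rho$ abelian. Since $\A$ is Mal'cev, $\rho$ is uniform, so all $\rho$-blocks have the same size $m$, and $n = |A/\rho|\cdot m$. By the central extension description of Section~\ref{sec:centralextensions}, $\A \cong \L \otimes \U$, where $\L = \A/\rho$ is abelian and $\U$ is an abelian Mal'cev algebra of order $m$ (the block algebra). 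Because $n$ is squarefree, $|L|$ and $|U| = m$ are coprime.

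Next, show that $\U$ is a direct product of pairwise non-isomorphic simple Mal'cev algebras. A finite abelian Mal'cev algebra is polynomially equivalent to a module. Hence $\U$ is affine over an abelian group of squarefree order $m = p_1\cdots p_k$, namely $\Z_{p_1}\times\cdots\times\Z_{p_k}$. Its congruences are those of the module, and these split along the primary components. So $\U \cong \U_1\times\dots\times\U_k$ with $|U_i| = p_i$. Each algebra of prime order is simple, and the factors are pairwise non-isomorphic because their orders are distinct primes. One caveat: the central extension construction must treat $\U$ as an algebra in the paper's sense, possibly after passing to a polynomially equivalent or reduct form. I would check that this does not change SMP complexity. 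Adding finitely many constants is harmless, since they can be supplied as extra generator coordinates.

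Finally, apply Theorem~\ref{thm:SMPmain} to conclude that $\SMP(\A)\in\textup{P}$. The main obstacle is the bookkeeping in the second step: ensuring that the $\U$ produced by the decomposition $\A\cong\L\otimes\U$ genuinely decomposes as a product of its primary parts as an algebra, not merely as a group. I expect this to follow from $\U$ being affine together with the coprimality of the primary components, since every term operation of a module respects the primary decomposition. If $\rho$ is trivial, so that $\A$ is abelian, the claim reduces to the known tractability for affine algebras via Gaussian elimination. The case of $m=1$ trivial blocks is covered in the same way.
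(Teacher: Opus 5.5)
Your argument is correct and follows the route the paper intends. The paper gives no explicit proof, but the corollary comes from writing $\A\cong\L\otimes\U$ with $\L,\U$ abelian Mal'cev, getting coprimality from $|A|=|L|\,|U|$ being squarefree, splitting the abelian algebra of squarefree order into affine factors of distinct prime orders (simple and pairwise non-isomorphic), and applying Theorem~\ref{thm:SMPmain}.

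One correction: you have the roles reversed. In the paper's convention $\U=\A/\rho$ is the quotient and $\L$ models the $\rho$-blocks, so the product decomposition must be checked for the quotient. Your decomposition argument covers that case verbatim, since it uses only that the algebra is abelian Mal'cev of squarefree order. Also, equal $\rho$-block sizes come from centrality of $\rho$, or directly from the universe $L\times U$, not from the Mal'cev property alone.
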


The main tool we will use for the subpower membership problem 
  is a connection between central extensions and clonoids. 
A clonoid from a source algebra to a target algebra is a collection of finitary functions 
  that is closed with respect to composition with term functions of the source algebra on the domain side 
  and term functions of the target algebra on the codomain side. 
We define clonoids more precisely in Section~\ref{Prelims}.
Two algebras with the same underlying set are term equivalent (polynomially equivalent) 
  if their set of term functions (polynomial functions) are the same.
Following \cite{Kom:SMP} we decompose the term clone of a central extension 
  via the \emph{difference clonoid}.
We can understand a $2$-nilpotent Mal'cev algebra by studying an associated clonoid 
  between abelian Mal'cev algebras. 
We develop this connection in Section~\ref{sec:centralextensions} and show that 
  for certain pairs of finite abelian Mal'cev algebras $\L$ and $\U$, 
  there are only finitely many central extensions of $\L$ by $\U$, 
  up to term equivalence (cf. Corollary~\ref{cor:termclonefg}).
In particular, we obtain the following.

\begin{restatable}{theorem}{mainthm}
\label{finitelymany}
     Let $n \in \N$. The number of $2$-nilpotent Mal'cev algebras on $\{1,2, \ldots,n\}$ (up to term equivalence) is finite
     if and only if $n$ is squarefree.
\end{restatable}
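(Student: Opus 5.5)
We prove the two directions separately: finiteness for squarefree $n$ via the clonoid description of central extensions, and infiniteness for non-squarefree $n$ via an explicit infinite family.

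\emph{Squarefree $n$.} Let $\A$ be a $2$-nilpotent Mal'cev algebra on $\{1,\dots,n\}$ with $n$ squarefree. Choose a central congruence $\rho$ with $\A/\rho$ abelian. Then $\A \cong \L \otimes \U$, where $\U = \A/\rho$ and $\L$ is the abelian algebra induced on a $\rho$-block. Both are abelian Mal'cev algebras, hence polynomially equivalent to modules, and $|L|\cdot|U| = n$. Since $n$ is squarefree, $|L|$ and $|U|$ are coprime, and every abelian Mal'cev algebra of squarefree order is a product of prime-order affine algebras. There are only finitely many such factorizations of $n$, and only finitely many term-inequivalent abelian Mal'cev algebras of each given squarefree order, because a clone on a prime-order set containing the affine Mal'cev operation is determined by a bounded amount of data. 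So it suffices to show that for each fixed pair $(\L,\U)$ there are only finitely many central extensions $\L\otimes\U$ up to term equivalence.

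By Section~\ref{sec:centralextensions}, the term clone of $\L\otimes\U$ is determined by the clones of $\L$ and $\U$ together with the difference clonoid, which is a clonoid from $\U$ to $\L$ of coprime orders. This is exactly the content of Corollary~\ref{cor:termclonefg}. The key input is that such clonoids are finitely generated, or equivalently that the lattice of clonoids between coprime-order abelian Mal'cev algebras is finite. With coprime orders, a function $U^k \to L$ in the clonoid is essentially determined by its values modulo the linear structure. The $\U$-side closure (minors and affine substitution) together with $\L$-linear combinations lets one reduce to functions of bounded arity. Hence there are only finitely many clonoids, and therefore only finitely many term clones. I expect this step, bounding the arity of generators of the difference clonoid, to be the main obstacle. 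My first attempt would use the coprimality to average over $\U$-translations with coefficients invertible in $\L$.

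\emph{Non-squarefree $n$.} Write $n = p^2 m$. It suffices to produce infinitely many pairwise term-inequivalent $2$-nilpotent Mal'cev algebras on $p^2$ elements. These can then be multiplied by a fixed set of size $m$, say the cyclic group $\Z_m$, as a direct product; the products remain $2$-nilpotent Mal'cev and remain pairwise term-inequivalent, since the factor on $p^2$ elements is recovered from the product's term clone. On $\Z_p^2$ take the group operations $x-y+z$ together with the operations
\[
f_k(x_1,\dots,x_k) = (0,\, x_1^{(1)}x_2^{(1)}\cdots x_k^{(1)}) \qquad (k\ge 2),
\]
multiplied in $\Z_p$ on first coordinates, and let $\A_k$ be the expansion by $f_k$. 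Each $\A_k$ is $2$-nilpotent with the central congruence given by the first coordinate. The algebras are pairwise term-inequivalent because the term functions of $\A_k$ have first-coordinate-difference polynomials of degree dividing patterns tied to $k$. Concretely, the $\Z_p$-multilinear degree of the second coordinate stays within multiples of $k$ modulo $p-1$, or in the case $p=2$ within the span of products of exactly $k$ variables. Since the degree-$k$ product is not available for distinct $k$, this gives infinitely many distinct clones.
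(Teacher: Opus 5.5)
\textbf{Squarefree direction.} Your outline follows the paper's strategy of decomposing $\Clo(\L\otimes\U)$ via the difference clonoid (Theorem~\ref{thm:clonegens}). However, the decisive step is left open and attributed to the wrong hypothesis. You take the key input to be that clonoids between abelian Mal'cev algebras of \emph{coprime} order are finitely generated (or finitely many), and you propose to prove it by averaging with coefficients invertible in $\L$. In that generality this is exactly the conjecture stated at the end of the paper; coprimality alone is not known to suffice.

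What makes the squarefree case work is that $|U|$ is itself squarefree, which you in fact observe: $\U$ is a product of prime-order affine algebras, hence polynomially equivalent to a module over a direct product of prime fields. Such a module is distributive and its ring has zero Jacobson radical. So Theorem~\ref{thm:distributive} applies and shows that every clonoid from $\U$ to $\L$, in particular $D(\L\otimes\U)$, is generated by its binary members. Corollary~\ref{cor:termclonefg}, which you cite, carries exactly this distributivity hypothesis, and that hypothesis is what does the work. Combined with ternary generation of $\Clo(\L'\times\U)$, Theorem~\ref{thm:clonegens} gives that $\Clo(\L\otimes\U)$ is generated by its ternary members. Finiteness is then immediate, since $[n]$ carries only finitely many ternary operations; you do not need to count pairs $(\L,\U)$ or clonoids.

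This uniform arity bound also repairs your final inference. $\Clo(\L\otimes\U)$ is \emph{not} determined by $\Clo(\L)$, $\Clo(\U)$ and $D(\L\otimes\U)$ alone, because the lifts $G$ in Theorem~\ref{thm:clonegens} (the distortions of the basic operations) matter too. So ``finitely many clonoids'' only yields finitely many clones once the arity of $G$ is also bounded. Likewise, ``every clonoid is finitely generated'' is not equivalent to ``finitely many clonoids''; what you need is a uniform arity bound.

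\textbf{Non-squarefree direction.} Here the paper simply cites Idziak. Your explicit family is a legitimate alternative, and the $\A_k$ are indeed $2$-nilpotent Mal'cev algebras: the kernel of the first-coordinate projection is central with abelian quotient. But the invariant you use to separate them is wrong.

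A direct induction shows that the $r$-ary term functions of $\A_k$ are exactly the maps $x\mapsto (a\cdot x^{(1)},\, a\cdot x^{(2)}+P(x^{(1)}))$ with $a\in\Z_p^r$ and $P$ in the span of products of $k$ linear forms. As functions, these $P$ span the reduced monomials (exponents at most $p-1$) of total degree $e$ with $1\le e\le k$ and $e\equiv k \pmod{p-1}$. Consequently:
\begin{itemize}
\item For $p=2$ you get all multilinear monomials of degrees $1,\dots,k$, not only products of exactly $k$ variables; already $f_k(x,\dots,x)=(0,x^{(1)})$.
\item For $j>k$ with $j\equiv k\pmod{p-1}$, the operation $f_k$ itself lies in $\Clo(\A_j)$, as $f_j(x_1,\dots,x_{k-1},x_k,\dots,x_k)$.
\end{itemize}
So ``the degree-$k$ product is not available for distinct $k$'' is false. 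The correct separation is one-directional: for $j>k$ we have $f_j\in\Clo(\A_j)$ but $f_j\notin\Clo(\A_k)$. This holds because $x_1^{(1)}\cdots x_j^{(1)}$ has degree $j>k$, and reduced monomials are linearly independent as functions on $\Z_p^j$.

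Finally, to form $\A_k\times\Z_m$ you must interpret $f_k$ on $\Z_m$ (say as the constant $0$) so that the signatures match. With that, your recovery of the $p^2$-factor from the product's clone goes through.
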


The case of $n \in \N$ non-squarefree for which there are countably infinitely many $2$-nilpotent 
  Mal'cev algebras on $\{ 1, \dots, n \}$ was shown in \cite{Id:CCMO} 
  and re-proved by Peter Mayr and the author using clonoids in \cite{MaWy:CBM}. 
This theorem motivates the following question.

\begin{que} \label{que:nilpotent}
  For $n \in \N$ squarefree, is the number of $k$-nilpotent Mal'cev algebras on $\{1,2, \dots, n \}$ 
  (up to term equivalence) finite for $k \ge 3$?
\end{que}

Answering \ref{que:nilpotent} using similar techniques as those used here for Theorem~\ref{finitelymany}
  would likely require a better understanding of clonoids from nilpotent non-abelian 
  Mal'cev algebras into abelian Mal'cev algebras.

The paper is structured as follows.
In Section~\ref{Prelims} we lay out the necessary background and preliminaries from universal algebra.
In Section~\ref{sec:centralextensions} we investigate central extensions in difference term varieties.
We develop the difference clonoid and use it to prove Theorem~\ref{finitelymany}.
In Section~\ref{section:SMP} we again use the difference clonoid for $2$-nilpotent algebras 
to prove Theorem~\ref{thm:SMPmain} and Corollary~\ref{thm:SMPcorollary}.

The results of this paper were originally presented in the PhD thesis of the author (\cite{Wy:CNMA}).

%%%%%%%%%%%%%%%     Section 2:  Preliminaries   %%%%%%%%%%%%%

\section{Preliminaries and Notation} 
\label{Prelims}

We begin with some preliminaries.
For a detailed exposition to the necessary background from 
universal algebra see \cite{BS:UA} or \cite{Berg:UA}.

Let $\N := \{1,2,3,\dots \}$ denote the set of positive integers.
For $n \in \N$ let $[n] := \{1,2,\dots,n\}$.

A signature (or type) $F$ is a collection of function symbols 
  together with an arity $\text{ar}(f) \in \N$ for each $f \in F$.

An algebra in the signature $F$ (or of type $F$) is a pair $\A = (A,\{f^\A \st f \in F\})$ 
  where $A$ is a non-empty set and $f^\A$ is a $\text{ar}(f)$-ary operation on $A$.
  Each $f^\A$ is called a basic operation of $\A$.
We say $\A$ is an algebra of finite type if $F$ is finite.
A term operation of $\A$ of arity $n$ is an operation $t \colon A^n \rightarrow A$
  that is obtained via composition from the basic operations of $\A$ 
  and from the projection operations $\pi_i^k \colon A^k \rightarrow A, \, (x_1, \dots, x_k) \mapsto x_i$.
The term clone (or simply the clone) of an algebra $\A$ 
  is the collection of all term operations of $\A$.
We denote the clone of $\A$ by $\Clo(\A)$,
  and we denote the $n$-ary term operations of $\A$ by $\Clo^n(\A)$.

A congruence of an algebra $\A$ is the kernel of a homomorphism from $\A$, 
  or equivalently a congruence is an equivalence relation on $A$ 
  that is compatible with all basic operations of $\A$. 
We denote by $\Con(\A)$ the congruence lattice of the algebra $\A$.

A variety of signature $F$ is a class of algebras defined by term equations.
A variety $\mathcal{V}$ is called congruence modular if for every algebra $\A \in \V$
  the congruence lattice $\Con(\A)$ satisfies the modular law, i.e. 
  \[z \leq x \implies x \wedge (y  \vee z) = (x \wedge y) \vee z \,\, \text{ for all } x,y,z \in \Con(\A). \]
A variety $\mathcal{V}$ is called congruence permutable if for all $\A \in \mathcal{V}$ 
  and for all $\alpha, \beta \in \Con(\A)$ we have $\alpha \circ \beta = \beta \circ \alpha$,
  where $\circ$ denotes the relational product.
In \cite{Mal:GTAS} Mal'cev showed that a variety is congruence permutable if and only if
  there is a ternary term $m(x,y,z)$, called a Mal'cev term, such that every $\A \in \mathcal{V}$ satisfies the identities 
  \[m^\A(x,y,y) = x = m^\A(y,y,x). \]
We call $m^\A(x,y,z)$ a Mal'cev operation of $\A$.
We call an algebra a Mal'cev algebra if it generates a congruence permutable variety,
  or equivalently if it has a Mal'cev operation in $\Clo(\A)$.

We refer to \cite{FM:CTC} for a complete exposition to commutator theory for congruence modular varieties.
Throughout we use the (term condition) commutator of two congruences 
  $[ \cdot , \cdot ] \colon \Con(\A) \times \Con(\A) \rightarrow \Con(\A)$
  for $\A$ an algebra from a congruence modular variety.
We say that a congruence $\alpha \in \Con(\A)$ is abelian if $[\alpha, \alpha] = 0_\A$,
  where $0_\A$ is the smallest congruence on $\A$. 
We say that an algebra $\A$ is abelian if $[1_\A, 1_\A] = 0_\A$ 
  where $0_\A$ and $1_\A$ denote the smallest and largest congruences on $\A$, respectively.
For $k\in \N$ we say that $\A$ is $k$-step nilpotent, or $k$-nilpotent if 
  \[ [1_\A, [\dots [1_\A, [1_\A,1_\A]] \dots ]] = 0_\A\]
  where the commutator is iterated $k$ times.
In particular we investigate $2$-nilpotent algebras from a congruence modular variety, which satisfy
  \[ [1_\A, [1_\A,1_\A]] = 0_\A .\]

Let  $A$ and $B$ be nonempty sets and $K \subseteq \bigcup_{n \in \N} B^{A^n}$ 
  a collection of functions from finite powers of $A$ into $B$.
For $n \in \N$ we define $K^{(n)}:= K \cap B^{A^n}$ to be the set of $n$-ary functions in $K$.

\begin{definition}
  Let $\A$ and $\B$ be algbras and let $C \subseteq \bigcup_{n \in \N} B^{A^n}$.
  $C$ is an $(\A,\B)$-clonoid if 
    \[C \Clo(\A) \subseteq C \, \text{ and } \Clo(\B)C \subseteq C, \]
    where juxtaposition represents function class composition, as in \cite{Pi:GTMF}.
  Explicitly, 
    \[ f(g_1,\dots , g_n) \in C \text{ for all } f \in C^{(n)} \text{ and } g_1, \dots, g_n \in \Clo^{m}(\A), \]
    and 
    \[ h(f_1, \dots, f_k) \in C \text{ for all } f_1, \dots, f_k \in C^{(n)} \text{ and } h \in \Clo^k(\B). \]
\end{definition}

Let $F(A,B) = \bigcup_{n \in \N} L^{U^n}$ denote the clonoid of all finitary functions from $A$ to $B$. 
For $G \subseteq F(A,B)$ we denote by 
  $\langle G \rangle_{\A,\B}$ the $(\A,\B)$-clonoid generated by $G$,
  i.e. the smallest $(\A,\B)$-clonoid containing $G$.
We say that $f \colon A^n \rightarrow B$ is an \emph{$n$-ary $(\A,\B)$-minor of $g \in F(A,B)$} 
  (or simply $n$-ary minor if $\A$ and $\B$ are clear from context) 
  if $f \in \langle g \rangle_{\A,\B}$.
We say that an $(\A,\B)$-clonoid $C$ is finitely generated if there is a finite set 
  $G \subset C$ such that $C = \langle G \rangle_{\A,\B}$. 

A module is distributive if its submodule lattice is a distributive lattice.
The Jacobson radical of a unital ring $R$ is the intersection of all maximal ideals of $R$.
The nilpotence degree of an ideal $I \trianglelefteq R$ is the least $n$ 
  such that the product of any $n$ elements from $I$ is zero, if such an $n$ exists.
The following is the main theorem of \cite{MaWy:CBM} 
  which we will use in Section~\ref{sec:centralextensions}.

\begin{theorem} \label{thm:distributive} \cite[Theorem 1.4]{MaWy:CBM}
  Let $\A$ be polynomially equivalent to a finite distributive $\R$-module, let $n$ be the nilpotence degree of the
    Jacobson radical of $\R$, and let $\B$ be polynomially equivalent to an $\S$-module such that $|A|$ is
    invertible in $\S$.
  \begin{enumerate}
    \item 
      Then every clonoid from $\A$ to $\B$ is generated by its $n+1$-ary functions 
        (by its $n$-ary functions if $\A$ is an $\R$-module).
    \item 
      If $\B$ is finite, then there are only finitely many clonoids from $\A$ to $\B$. 
\end{enumerate}
\end{theorem}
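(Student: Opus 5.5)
The plan is to reduce to the module case and then decompose an arbitrary $k$-ary member of a clonoid into pieces that factor through linear maps $A^k \to A^n$. Such pieces are minors of $n$-ary members.

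\textbf{Step 1: reduction to modules.} First I replace $\A$ and $\B$ by the modules they are polynomially equivalent to. Write $\A$ as an $\R$-module (possibly with extra constants) and $\B$ as an $\S$-module.

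On the target side, $\Clo(\B)$ contains $x-y+z$. Hence for each $k$ the set $C^{(k)}$ is closed under affine combinations with coefficients in $\S$.

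On the source side, $\Clo(\A)$ contains at least the idempotent $\R$-linear combinations $\sum r_ix_i$ with $\sum r_i=1$. Any constant or translation needed can be simulated by one extra variable: a $k$-ary function $f(x)$ is a minor of the $(k+1)$-ary function $f(x_1-z,\dots,x_k-z)$, evaluated at $z=0$. This extra variable is what turns ``$n$-ary'' into ``$(n+1)$-ary'' in the polynomially-equivalent case. So it suffices to show the following: when $\A$ is a genuine $\R$-module, every clonoid is generated by its $n$-ary members.

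\textbf{Step 2: averaging projections.} Fix $f\in C^{(k)}$. For a submodule $K\le A^k$, define
\[ e_K f(x)=|K|^{-1}\sum_{y\in K} f(x+y). \]
This is an $\S$-combination of minors of $f$, since $x\mapsto x+y$ is a linear substitution of $\A$-terms with coefficients absorbed appropriately. It is legitimate because $|K|$ divides a power of $|A|$, which is invertible in $\S$. Hence $e_Kf\in C$.

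These operators commute and are idempotent. Möbius inversion over the finite lattice of submodules of $A^k$ then writes
\[ f=\sum_K f_K, \]
where each $f_K$ lies in $C$, factors through $A^k/K$, and is ``primitive'': it is annihilated by $e_{K'}$ for every $K'>K$.

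\textbf{Step 3: primitive pieces factor through $A^n$ (main obstacle).} The key claim is that whenever a nonzero primitive component exists, the quotient $A^k/K$ must be ``small''. Precisely, it embeds into $A^n$ via a linear map $\phi\colon A^k\to A^n$ with kernel $K$. Then $f_K=g\circ\phi$ for some $g\colon A^n\to B$. Moreover $g$ can be recovered in $C^{(n)}$ by composing $f_K$ with a linear section of $\phi$ on its image and averaging. This gives $f_K\in\langle C^{(n)}\rangle$.

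To prove the claim I would use distributivity of $\A$. Distributivity forces each quotient of $A^k$ whose submodule lattice has a unique atom-above-zero structure, i.e.\ is subdirectly irreducible, to be a quotient of a cyclic submodule. Such a quotient looks like $\R/J$ for an ideal $J$ sitting in a uniserial chain. Its length is controlled by the nilpotence degree $n$ of the Jacobson radical. I would then map $A^k$ to $A^n$ by $n$ linear functionals, chosen to separate successive radical layers $\mathrm{Rad}^i$. If this layer-counting argument fails, I would fall back on the following: $A^k/K$ is cyclic and $\R$ is finite, so an embedding into a power of $A$ exists, and one can bound the needed power by the radical filtration length.

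\textbf{Step 4: finiteness.} Part (2) follows directly. If $B$ is finite, then $F(A,B)^{(n+1)}$ is finite, so there are only finitely many subsets of it. Each clonoid is generated by one of these subsets by part (1), so there are only finitely many clonoids.
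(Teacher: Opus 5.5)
Your Step~2 fails, and not for a technical reason. The operator $e_Kf(x)=|K|^{-1}\sum_{y\in K}f(x+y)$ substitutes translations $x\mapsto x+y$ by nonzero constants $y$ into $f$. These are polynomial operations of $\A$, but they are not term operations. For an $\R$-module the term operations are the linear maps $x\mapsto rx$. For $(\Z_4,x-y+z)$, which is polynomially equivalent to the module $\Z_4$, the term operations are all idempotent. A clonoid is only closed under substituting term operations, so $e_Kf$ need not lie in $C$, and in general it does not. If $\A,\B$ are modules and $f(0)=0$, every member of $\langle f\rangle_{\A,\B}$ vanishes at $0$, whereas $e_{A^k}f$ is the constant $|A|^{-k}\sum_{y\in A^k}f(y)$. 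The phrase ``coefficients absorbed appropriately'' hides exactly this.

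The decomposition cannot be rescued, as you can see from what it would prove. Take $\A=\Z_4$ (distributive, $n=2$) and $\B=\F_5$. With translations allowed, Step~2 is Fourier analysis on $(A^k,+)$. A nonzero primitive piece then forces $\Z_4^k/K$ to be cyclic, so $f_K=h\circ\lambda$ for a linear functional $\lambda$, with $h=f_K\circ\tau$ for a section $\tau$. Every clonoid would therefore be generated by its unary members. That is false. Let $f=\delta_{(1,2)}\colon\Z_4^2\to\F_5$ and $C=\langle f\rangle$. The binary members of $\langle C^{(1)}\rangle$ are spanned by the functions $x\mapsto f(v\,w^Tx)$ with $v,w\in\Z_4^2$, that is, by the indicator functions of the sets $\{x\st w^Tx=c\}$ with $c\in\{1,3\}$. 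The functional $g\mapsto\sum_{t\in\Z_4}\bigl(g(t,0)-g(t,2)\bigr)$ vanishes on all of these but sends $f$ to $-1$.

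So the nilpotence degree of the radical can only enter through the restriction to linear substitutions. Your Step~3 (``layer counting'', ``fall back on'') contains no argument that would make it enter. Moreover, a linear section of $\phi$ on its image need not exist; for example, $x\mapsto 2x$ on $\Z_4$ has none.

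The paper does not reprove this theorem; it cites \cite[Theorem 1.4]{MaWy:CBM}. The mechanism is visible in Lemma~\ref{lem:dps} (which is \cite[Corollary 3.5]{MaWy:CBM}) and in Lemmas~\ref{lem:malcev} and~\ref{lem:fdecomp}, and it uses only linear substitutions. The restriction $f_N$ of $f$ to a submodule $N\le A^k$, extended by $0$, is written as $\sum_r s(r)f(rx)$. Each matrix $r=PQ$ factors through a small power $A^m$, so $f(rx)$ is a minor of the $m$-ary member $y\mapsto f(Py)$ of $C$. The coefficients $s(r)\in S$ depend on $N$ but not on $f$. Invertibility of $|A|$ in $\S$ is what makes such interpolation formulas exist, and the hypotheses on $\A$ and $\R$ are what bound the needed $m$; $f$ is then assembled from these restrictions.

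The extra variable in the polynomially equivalent case also works differently from your Step~1, where ``evaluated at $z=0$'' is not a clonoid operation. As in Lemmas~\ref{lem:malcev} and~\ref{lem:fdecomp}, one passes to $f(x,z)-f(z,\dots,z)$ and uses the maps $r*_zx$. These are linear with respect to the moving origin $z$, hence idempotent term operations. This works precisely because $s(r)$ does not depend on the origin. Your Step~4 is fine once (1) is established.
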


%%%%%%%%%%%%%%%     Section 3:  Central Extensions  %%%%%%%%%%%%%

\section{Central extensions} 
\label{sec:centralextensions}

Let $\V$ be a congruence modular variety over the signature $F$. 
Then $\V$ has a ternary \emph{difference term} $d$ 
  such that for all $\A\in \V$ and for all $x,y\in A$,
  \[ d^\A(x,x,y) = y  \quad \text{and} \quad  d^\A(y,x,x) \equiv y \mod [\alpha,\alpha], \]
  where
  \[ \alpha = \Cg_\A(x,y) := \bigcap \{\theta \in \Con(\A) \st (x,y) \in \theta\},\]
  is the smallest congruence relating $x$ and $y$ and $[\alpha,\alpha]$ is the commutator 
  of $\alpha$ with itself. 

Note that if $\L \in \V$ is abelian then $[\theta, \theta] = 0$ for all $\theta \in \Con(\A)$.
So $d^\L(x,x,y) = y$ and $d^\L(x,y,y) = x$ for all $x,y \in L.$ 
That is, $d$ interprets as a Mal'cev operation on $\L$. 
 
Let $\V$ be a congruence modular variety with signature $F$ and difference term $d$. 
Let $\L, \U \in \V$. Suppose that $\L$ is an abelian algebra with associated group 
  $\hat{\L} = (L, +, -, 0)$ 
  so that $d^\L(x,y,z) = x-y+z$ for all $x,y,z \in L$. 
See \cite{FM:CTC}~Chapter~5 for a detailed explanation of the abelian group associated 
  to an abelian Mal'cev algebra.
Suppose for each operation symbol $f \in F$ we are given a map $\hat{f} \colon U^k \rightarrow L$ 
  where $k$ is the arity of $f$. 
Let $\hat{F} := \{\hat{f} \st f \in F\}$. We define a new algebra $\L \otimes^{\hat{F}} \U$ 
  with universe $L \times U$ and basic operations 
  \[f^{\L \otimes^{\hat{F}} \U} \colon (L \times U)^k \rightarrow L \times U, \, (\ell, u) \mapsto (f^\L(\ell) + \hat{f}(u), f^\U(u)) \quad \text{ for } f \in F. \]
Here and throughout we slightly abuse notation and let $\ell = (\ell_1,\dots,\ell_k)$ 
  and $u = (u_1,\dots,u_k)$ for $( (\ell_1,u_1),\dots,(\ell_k,u_k) ) \in (L\times U)^k$.
We note that $\L \otimes^{\hat{F}} \U$ may not be in the variety $\V$. 
 
In \cite{FM:CTC} Freese and McKenzie show that in a congruence modular variety $\V$ every algebra $\A$
  that is isomorphic to $\U$ modulo a central congruence $\alpha$ is of the form $\L\otimes\U$ 
  for some abelian $\L\in \V$.
 
\begin{theorem}[\cite{FM:CTC} Theorems 7.1 \& 7.2] 
  Let $\V$ be a congruence modular variety with signature $F$ and let $\A \in \V$. 
  For $\zeta_\A$ the center of $\A$, let $\U = \A/\zeta_\A$.
  \begin{enumerate}
    \item  
      There is an abelian algebra $\L \in \V$ and a system $\hat{F}$ such that $\A \cong \L \otimes^{\hat{F}}\U$. 
      Moreover, the center of $\L \otimes^{\hat{F}} \U$ is the kernel of the projection onto $U$. 
    \item 
      $\A$ is $2$-nilpotent if and only if $\A \cong \L \otimes^{\hat{F}}\U$ for some system $\hat{F}$ 
        and for $\L, \U$ abelian algebras in $\V$. 
  \end{enumerate}
\end{theorem}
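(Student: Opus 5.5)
Part (1) is the standard construction from \cite{FM:CTC}. Let $\alpha = \zeta_\A$ and $\U = \A/\alpha$. First I build the abelian algebra $\L$ from the $\alpha$-blocks. Since $[\alpha,\alpha] \le [1_\A,\alpha] = 0$, the difference term $d$ restricted to any $\alpha$-block is a Mal'cev operation. So each block is an abelian group once a base point is chosen. Fix a transversal $s \colon U \to A$ with $s(u) \in u$. Centrality of $\alpha$ lets me identify all blocks canonically: $x \mapsto d(x, s(u), s(v))$ maps the block $u$ bijectively onto the block $v$, and these identifications are coherent by the term condition. I take $L$ to be a single block, say the block of $s(u_0)$, with $0 = s(u_0)$. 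Equivalently, $\L$ is the quotient of $\alpha$ (as a subalgebra of $\A^2$) by the congruence identifying $(a,b)$ with $(d(a,b,c),c)$, which is the "$\A(\alpha)/\Delta$" construction. This second form visibly puts $\L$ in $\V$, since it is a quotient of a subalgebra of $\A^2$. I then check that $\L$ is abelian: two pairs are related by $\Delta$ exactly when they have the same "difference", and the term condition for $[1,\alpha]=0$ gives the commutator computation.

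Next I define the bijection $A \to L \times U$ by $a \mapsto ([(s(a/\alpha), a)], a/\alpha)$ and transport the operations along it. For a basic $f$ of arity $k$, I expand $f^\A(a_1,\dots,a_k)$ with $a_i$ in block $u_i$. Centrality together with the difference-term identities shows that $f^\A$ is affine in the $\Delta$-class coordinate. More precisely, the class of $(s(f^\U(u)), f^\A(a))$ equals the sum of $f^\L$ applied to the classes of $(s(u_i),a_i)$ and the class $\hat f(u)$ of $(s(f^\U(u)), f^\A(s(u_1),\dots,s(u_k)))$. This is the key calculation, and I expect it to be the main obstacle. It uses the term condition to move $f$ across the $\Delta$-classes and additivity of $\L$ to split the expression into its $\ell$-part and $u$-part. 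Finally, the center of $\L\otimes^{\hat F}\U$ is the kernel of the projection onto $U$, because it corresponds to $\zeta_\A$ under the isomorphism.

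For part (2), the forward direction is immediate. If $\A$ is $2$-nilpotent, then $[1,1] \le \zeta_\A$, so $\U = \A/\zeta_\A$ is abelian, and part (1) gives $\L \in \V$ abelian. For the converse, suppose $\A \cong \L\otimes^{\hat F}\U$ with $\L,\U$ abelian. Let $\eta$ be the kernel of the projection onto $U$. Since $\U$ is abelian, $[1,1] \le \eta$. It remains to show $[1,\eta]=0$, and I would verify the term condition directly. Take $t(x,y)$ with $x$ varying along $1$ and $y$ along $\eta$-related tuples. The $L$-coordinate of $t$ has the form $t^\L(\ell_x,\ell_y) + \hat t(u_x,u_y)$, where $\hat t$ is computed inductively. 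For $\eta$-related $y,y'$ the $u_y$ parts agree, so the differences $t(x,y)-t(x,y')$ depend only on $t^\L$, which is affine. Hence they are independent of $x$, which gives $[1,\eta]=0$ and so $[1,[1,1]]=0$.
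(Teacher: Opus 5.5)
Your proposal is correct and is essentially the standard Freese--McKenzie argument, which the paper does not reproduce but simply cites from \cite{FM:CTC}. That argument takes $\L=\A(\alpha)/\Delta_{\alpha,1}$ and uses the transversal bijection $a\mapsto([(s(a/\alpha),a)],a/\alpha)$, which is injective because $[1,\alpha]=0$ forces $(x,y)\,\Delta_{\alpha,1}\,(x,z)\Rightarrow y=z$. For the converse of (2) it checks $[1,\eta]=0$ directly with the term condition, via the decomposition $t=(t^{\L}+\hat t,t^{\U})$.

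The step you flag as the main obstacle is in fact immediate, because the operations of $\L$ act coordinatewise on representatives. Set $c=f^{\A}(s(u_1),\dots,s(u_k))$, $e=f^{\A}(a_1,\dots,a_k)$ and $w=s(f^{\U}(u))$. Then $f^{\L}\bigl([(s(u_1),a_1)],\dots,[(s(u_k),a_k)]\bigr)=[(c,e)]$, and $[(w,c)]+[(c,e)]=d^{\L}\bigl([(w,c)],[(c,c)],[(c,e)]\bigr)=[(d(w,c,c),e)]=[(w,e)]$, using only that $d$ is Mal'cev on $\alpha$-related pairs.
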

 
We call $\L \otimes^{\hat{F}} \U$ a \emph{central extension} of $\L$ by $\U$. 
We note that $\L \otimes^{\hat{F}} \U$ is sometimes called a wreath product or a 
  semidirect product in the literature. 
We will usually suppress the superscript $\hat{F}$ and simply write $\L \otimes \U$. 
Since $\L$ is an abelian Mal'cev algebra, it is straightforward to show by induction on complexity of terms 
  that for each $k \in \N$, every $k$-ary term $t$ over $F$ induces a term function $t^{\L \otimes \U}$ of $\L \otimes \U$ of the form
  \[t^{\L \otimes \U} \colon (L \times U)^k \rightarrow L \times U, \, (\ell, u) \mapsto (t^\L(\ell) + \hat{t}(u), t^\U(u))\]
  for some $\hat{t}\colon U^k \rightarrow L$. 
We will often abuse notation and write $t^{\L \otimes \U} = (t^\L, t^\U) + \hat{t}$. 
We call $\hat{t} \colon U^k \rightarrow L$ the distortion of the term function $t^{\L \otimes \U}$. 
More generally for any $k$-ary term function $t$ on $\L \otimes \U$ and for any 
  $e \colon U^k \rightarrow L$, we abuse notation and denote by $t + e$ the $k$-ary 
  function on $L \times U$ defined by
  \[(\ell, u) \mapsto (t^\L(\ell)+\hat{t}(u) + e(u), t^\U(u)).\]
 
\begin{definition} \label{def:elprime} 
  For $\V$ a congruence modular variety and $\L \in \V$ an abelian algebra we define a new algebra $\L'$ 
    of signature $F$ with universe $L$ and basic operations 
    \[f^{\L'}(\ell_1, \ldots, \ell_k) := f^\L(\ell_1, \ldots, \ell_k) - f^\L(0, \ldots, 0)\]
    for $f \in F$ of arity $k$ and for $0 \in L$.
\end{definition}

Note that $\L' = \L$ if and only if $\{0\}$ is a subuniverse of $\L$. 
Moreover, for $t$ an $F$-term of arity $k$,  by induction on the complexity of terms, we see that 
  \[t^{\L'}(\ell_1, \ldots,\ell_k) = t^\L(\ell_1, \ldots, \ell_k) - t^\L(0, \ldots, 0).\] 
We note that $\L$ and $\L'$ are polynomially equivalent and have the same idempotent term functions. 
Moreover, we see that $\L' \in \V$ as follows.
Let $s$ and $t$ be $F$-terms and suppose that $s^\L = t^\L$ is a $k$-ary identity of $\L$.
Then $s^\L(0, \dots, 0) = t^\L(0, \dots, 0)$ for $0 \in L$,
  and so $s^\L(x) - s^\L(0, \dots, 0) = t^\L(x) - t^\L(0, \dots,0)$ for all $x \in L^k$. 
That is, $s^{\L'} = t^{\L'}$ is an identity of $\L'$.
So $\L'$ satisfies all identities satisfied in $\L$, and hence $\L' \in \V(\L)$. 
In particular, since $\L$ is abelian, $\L'$ is also abelian with Mal'cev term 
  $d^{\L'}(x,y,z) = d^\L(x,y,z) = x-y+z$.
We will often consider the algebra $\L'$ expanded by $0$, which we denote by $\L'_0$.
 
For $f\in F$ define $f'(u) := \hat{f}(u)+f^\L(0,\dots,0)$. 
Let $F' := \{f'\st f\in F\}$. 
Then
  \[ \L\otimes^{\hat{F}}\U = \L'\otimes^{F'}\U. \] 
 
The easiest example of a central extension of $\L$ by $\U$ is the direct product, $\L \times \U$. 
In this case, $\hat{f} = 0$ for every $f \in F$. 
 
For clones $C$ and $D$, we say that a map $\phi \colon C \rightarrow D$ is a \emph{clone homomorphism} if $\phi$
  \begin{enumerate}
    \item preserves arities, i.e. the arity of $f$ equals the arity of $\phi(f)$ for all $f \in C$,
    \item preserves projections, i.e. maps projections in $C$ to the corresponding projections in $D$,
    \item and preserves generalized compositions, i.e. \[\phi(f(g_1,\ldots,g_k)) = \phi(f)(\phi(g_1), \ldots, \phi(g_k)) \] 
      for all $f, g_1, \ldots, g_k \in C$ of the appropriate arities. 
  \end{enumerate}
We note that if $\phi \colon C \rightarrow D$ is a bijective clone homomorphism then $\phi^{-1} \colon D \rightarrow C$ 
  is a clone homomorphism.
In this case we call $\phi$ a clone isomorphism.
 
The following lemmas relate arbitrary central extensions to the direct product via a clone homomorphism. 
 
\begin{lemma} \label{lem:clonehom}
  Let $\V$ be a congruence modular variety and let $\L \otimes \U \in \V$ be a central extension 
    of an abelian $\L$ by $\U$. Let $\L'$ be as in Definition~\ref{def:elprime}. 
  The map
    \[ \xi\colon\Clo(\L\otimes\U) \to \Clo(\L'\times\U),\ f^{\L\otimes \U} \mapsto f^{\L'\times\U}, \]
    is a surjective clone homomorphism. 
\end{lemma}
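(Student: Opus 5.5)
The key point is that $\xi$ is well defined, i.e.\ that $f^{\L'\times\U}$ depends only on the term function $f^{\L\otimes\U}$ and not on the term $f$ representing it. Once that is settled, the clone homomorphism properties are routine. Surjectivity is also immediate, because every term function of $\L'\times\U$ has the form $t^{\L'\times\U}$ for an $F$-term $t$, and it is the image of $t^{\L\otimes\U}$.

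For well-definedness, let $s,t$ be $k$-ary $F$-terms with $s^{\L\otimes\U}=t^{\L\otimes\U}$. We need $s^{\L'}=t^{\L'}$ and $s^\U=t^\U$. By the normal form discussed above,
\[ s^{\L\otimes\U}(\ell,u)=(s^\L(\ell)+\hat s(u),\, s^\U(u)), \qquad t^{\L\otimes\U}(\ell,u)=(t^\L(\ell)+\hat t(u),\, t^\U(u)). \]
Comparing second coordinates gives $s^\U=t^\U$. From the first coordinates we get $s^\L(\ell)+\hat s(u)=t^\L(\ell)+\hat t(u)$ for all $\ell\in L^k$ and $u\in U^k$.

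Fix any $u$ and subtract the instance $\ell=(0,\dots,0)$ from the general instance. The distortion terms cancel, and we obtain
\[ s^\L(\ell)-s^\L(0,\dots,0)=t^\L(\ell)-t^\L(0,\dots,0). \]
By the formula $t^{\L'}(\ell)=t^\L(\ell)-t^\L(0,\dots,0)$ established after Definition~\ref{def:elprime}, this says exactly $s^{\L'}=t^{\L'}$. Since operations in a direct product act coordinatewise, it follows that $s^{\L'\times\U}=t^{\L'\times\U}$.

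The homomorphism properties follow directly from the definition of $\xi$ on terms.
\begin{enumerate}
\item Arities are preserved, since $t^{\L\otimes\U}$ and $t^{\L'\times\U}$ are both $k$-ary when $t$ is.
\item A projection $\pi_i^k$ is the term function of the variable term $x_i$ in both algebras, so $\xi$ sends projections to the corresponding projections.
\item If $f=t^{\L\otimes\U}$ and $g_j=s_j^{\L\otimes\U}$, then $f(g_1,\dots,g_n)$ is the term function of the composed term $t(s_1,\dots,s_n)$. Its image is therefore $t(s_1,\dots,s_n)^{\L'\times\U}=t^{\L'\times\U}\bigl(s_1^{\L'\times\U},\dots,s_n^{\L'\times\U}\bigr)$, which is $\xi(f)\bigl(\xi(g_1),\dots,\xi(g_n)\bigr)$.
\end{enumerate}

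The only real obstacle is the well-definedness step, specifically the need to pass to $\L'$. The first coordinates alone do not give $s^\L=t^\L$, because the constant parts can be traded between $t^\L$ and $\hat t$. Normalizing at $\ell=(0,\dots,0)$, which is what $\L'$ does, removes this ambiguity.
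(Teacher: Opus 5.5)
Your proposal is correct and follows the paper's proof essentially verbatim. Both prove well-definedness by subtracting the $\ell=(0,\dots,0)$ instance to get $s^{\L'}=t^{\L'}$, and both read off the homomorphism properties and surjectivity directly from the definition of $\xi$ on terms; you additionally spell out the $\U$-coordinate and the composition step, which the paper calls immediate.
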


\begin{proof}  
  Note that $\xi$ is well-defined since for $k$-ary $F$-terms $f,g$ with 
    $f^{\L\otimes \U} = g^{\L\otimes \U}$, we have $f^\U=g^\U$ and
    \[ f^\L(\ell)+\hat{f}(u) = g^\L(\ell)+\hat{g}(u) \text{ for all } \ell\in L^k, u\in U^k. \]
  Setting $\ell = (0,\dots,0)$ in the latter, we obtain
    \[ f^\L(0,\dots,0)+\hat{f}(u) = g^\L(0,\dots,0) +\hat{g}(u) \text{ for all } u\in U^k. \]
  Taking the difference in $\L$ of the two equations above we obtain
    \[ f^\L(\ell) - f^\L(0,\dots,0) = g^\L(\ell)-g^\L(0,\dots,0) \text{ for all } \ell\in L^k. \]
  Thus $f$ and $g$ induce the same function on $\L'$ and $\xi$ is well-defined.
 
  From its definition it is immediate that $\xi$ preserves arities, projections, and the composition of functions.  
  Moreover, let $f$ be a term in the signature of $\V$ that induces the term function 
    $f^{\L' \times \U}$ on $\L' \times \U$. 
  Then $f$ induces a term function $f^{\L \otimes \U}$ on $\L \otimes \U$, and 
    $\xi(f^{\L \otimes \U}) = f^{\L' \times \U}$.
  Hence $\xi$ is surjective.
\end{proof}

Note that  for any central extension $\L \otimes \U$, the projection 
  $\pi_U \colon \L \otimes \U \rightarrow \U$ is a homomorphism.

\begin{lemma} \label{lem:injective} 
  Let $\L \otimes \U$  be as in Lemma~\ref{lem:clonehom} and let $\L'$ be as in Definition~\ref{def:elprime}.
  The projection $\pi_L \colon \L \otimes \U \rightarrow \L'$ is a homomorphism if and only if 
    $\L \otimes \U = \L' \times \U$. 
\end{lemma}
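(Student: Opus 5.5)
Both directions come down to comparing first coordinates. The equality $\L \otimes \U = \L' \times \U$ is to be read as an equality of algebras on $L \times U$. By the rewriting $\L\otimes^{\hat{F}}\U = \L'\otimes^{F'}\U$ noted earlier, it is equivalent to $f' = 0$ for every $f \in F$, that is, $\hat{f}(u) = -f^\L(0,\dots,0)$ for all $u \in U^k$.

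For the direction ($\Leftarrow$), assume $\L \otimes \U = \L' \times \U$. The basic operations are then $f((\ell,u)) = (f^{\L'}(\ell), f^\U(u))$. The projection $\pi_L$ sends this to $f^{\L'}(\ell) = f^{\L'}(\pi_L(\ell_1,u_1),\dots,\pi_L(\ell_k,u_k))$, so $\pi_L$ is a homomorphism, as is any coordinate projection of a direct product.

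For the direction ($\Rightarrow$), I would write each basic operation in the form $\L'\otimes^{F'}\U$:
\[ f^{\L\otimes\U}((\ell_1,u_1),\dots,(\ell_k,u_k)) = \bigl(f^{\L'}(\ell) + f'(u),\ f^\U(u)\bigr). \]
If $\pi_L$ is a homomorphism into $\L'$, then for all $\ell \in L^k$ and $u \in U^k$ we get $f^{\L'}(\ell) + f'(u) = f^{\L'}(\ell)$. Cancelling in the group $\hat{\L}$ gives $f'(u) = 0$ for all $u$. Hence every $f' \in F'$ is the zero map, and $\L'\otimes^{F'}\U$ is exactly $\L'\times\U$.

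The only point needing care is the identity $\L\otimes^{\hat F}\U = \L'\otimes^{F'}\U$. It follows from $f^\L(\ell) + \hat f(u) = \bigl(f^\L(\ell) - f^\L(0,\dots,0)\bigr) + \bigl(\hat f(u) + f^\L(0,\dots,0)\bigr)$, using the definitions of $\L'$ and $f'$. There is no real obstacle beyond this bookkeeping, since everything is checked on basic operations and no induction on terms is needed.
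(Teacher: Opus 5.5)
Your proposal is correct and matches the paper's argument: apply $\pi_L$ to the first coordinate to get $\hat f(u) = -f^\L(0,\dots,0)$, which is your $f' = 0$, and conclude that the operations coincide with those of $\L'\times\U$. The paper runs this computation for an arbitrary term $s$ rather than only for basic operations, but checking the basic operations already suffices for equality of the algebras.
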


\begin{proof}
  Suppose that $\pi_L \colon \L \otimes \U \rightarrow \L'$ is a homomorphism.
  Let $s$  be an $F$-term and let $\ell \in L^k$ and $u \in U^k$.
  Then 
    \[ \pi_L(s^{\L \otimes \U}(\ell,u))  = \pi_L(s^\L(\ell) + \hat{s}(u), s^\U(u))  = s^\L(\ell) + \hat{s}(u). \]
  Since $\pi_L$ is a homomorphism, 
    \[\pi_L(s^{\L \otimes \U}(\ell,u)) = s^{\L'}(\pi_L(\ell, u)) = s^{\L'}(\ell)= s^\L(\ell) - s^\L(0, \ldots,0).\]
  So $\hat{s}(u) = -s^\L(0, \ldots,0)$.
  Therefore,
    \[ s^{\L \otimes \U} = (s^\L + \hat{s}, s^\U) = (s^\L - s^\L(0, \ldots, 0), s^\U) = s^{\L' \times \U}. \]
  Hence $\L \otimes \U = \L' \times \U$.
  Conversely if $\L \otimes \U = \L' \times \U$ then clearly $\pi_L$ is a homomorphism. 
\end{proof}

The following theorem will show that $\L \otimes \U$ is abelian when $\xi$ is injective and $\U$ is abelian. 

\begin{theorem}\cite[Theorem 9.3]{ALV:III} \label{ALV:clone} 
  Suppose $\A$ and $\B$ are two algebras of signature $F$. 
  \begin{enumerate}
    \item 
      $\B \in \V(\A)$ if and only if there is a clone homomorphism 
        \[\phi \colon \Clo(\A) \rightarrow \Clo(\B)\]
        with $\phi(f^\A) = f^\B$ for all $f \in F$.
    \item 
      $\V(\A) = \V(\B)$ if and only if there exists a clone isomorphism 
        \[\phi \colon \Clo(\A) \rightarrow \Clo(\B)\]
        with  $\phi(f^\A) = f^\B$ for all $f \in F$.
  \end{enumerate}
\end{theorem}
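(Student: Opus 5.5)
This is the classical Birkhoff--type correspondence between clone homomorphisms and membership in varieties, so I prove the two directions of (1) separately and then derive (2) from (1).

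\emph{Only if direction of (1).} Suppose $\B \in \V(\A)$. Define $\phi(t^\A) := t^\B$ for every $F$-term $t$. The key point is well-definedness. If $s^\A = t^\A$ as $k$-ary functions, then $s \approx t$ is an identity of $\A$. Identities of $\A$ are preserved under $H$, $S$ and $P$, so $s \approx t$ holds in every member of $\V(\A)$, and in particular $s^\B = t^\B$. Since every term operation of $\A$ is induced by some term, $\phi$ is defined on all of $\Clo(\A)$. It preserves arities and sends projections to projections, because the variable $x_i$ induces $\pi_i^k$ in both algebras. It preserves compositions, because term substitution $f(g_1,\dots,g_k)$ is induced by the substituted term in both algebras. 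By construction $\phi(f^\A) = f^\B$ for each $f \in F$.

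\emph{If direction of (1).} Suppose $\phi\colon \Clo(\A)\to\Clo(\B)$ is a clone homomorphism with $\phi(f^\A)=f^\B$. By induction on term complexity, $\phi(t^\A) = t^\B$ for every term $t$:
\begin{itemize}
\item projections go to projections, which handles variables;
\item for $t = f(t_1,\dots,t_k)$, preservation of composition gives $\phi(t^\A) = \phi(f^\A)(\phi(t_1^\A),\dots,\phi(t_k^\A)) = f^\B(t_1^\B,\dots,t_k^\B) = t^\B$.
\end{itemize}
Now let $s \approx t$ be an identity of $\A$ in $k$ variables, so $s^\A = t^\A$. Then $s^\B = \phi(s^\A) = \phi(t^\A) = t^\B$, so $\B$ satisfies every identity of $\A$. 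By Birkhoff's HSP theorem, $\V(\A)$ is exactly the class of algebras of signature $F$ satisfying the identities of $\A$, hence $\B \in \V(\A)$.

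\emph{Part (2).} If $\V(\A)=\V(\B)$, then $\B\in\V(\A)$ and $\A\in\V(\B)$, and (1) gives homomorphisms $\phi\colon\Clo(\A)\to\Clo(\B)$ and $\psi\colon\Clo(\B)\to\Clo(\A)$ with $\phi(t^\A)=t^\B$ and $\psi(t^\B)=t^\A$ for all terms $t$. Every term operation is induced by a term, so $\psi\circ\phi$ and $\phi\circ\psi$ are identities. Hence $\phi$ is a bijective clone homomorphism, i.e.\ a clone isomorphism. Conversely, if $\phi$ is a clone isomorphism with $\phi(f^\A)=f^\B$, then $\phi^{-1}$ is a clone homomorphism (as noted earlier) with $\phi^{-1}(f^\B)=f^\A$. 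Applying (1) in both directions gives $\B\in\V(\A)$ and $\A\in\V(\B)$, so $\V(\A)=\V(\B)$.

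The only step needing care is well-definedness in the only if direction of (1), which rests on identities being preserved under $H$, $S$ and $P$; the rest is routine induction on terms.
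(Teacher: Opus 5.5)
Your proof is correct: the natural map $t^\A \mapsto t^\B$ is well defined exactly when every identity of $\A$ holds in $\B$. Induction on terms shows that any clone homomorphism sending each $f^\A$ to $f^\B$ must be this map, Birkhoff's HSP theorem gives (1), and (2) follows by composing the two maps. The paper does not prove this statement but cites it from \cite[Theorem 9.3]{ALV:III}, and your argument is the standard proof of that result.
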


\begin{cor} 
  Let $\xi$ and $\L \otimes \U$  be as in Lemma~\ref{lem:clonehom} and suppose that $\xi$ is injective. 
  For $k \in \N$, if $\U$ is $k$-(super)nilpotent, then $\L \otimes \U$ is $k$-(super)nilpotent.
  In particular if $\U$ is abelian, then $\L \otimes \U$ is abelian. 
\end{cor}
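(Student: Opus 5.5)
Since $\xi$ is a surjective clone homomorphism by Lemma~\ref{lem:clonehom}, injectivity makes it a clone isomorphism $\Clo(\L\otimes\U)\to\Clo(\L'\times\U)$ that sends each basic operation $f^{\L\otimes\U}$ to $f^{\L'\times\U}$. Theorem~\ref{ALV:clone}(2) then gives $\V(\L\otimes\U)=\V(\L'\times\U)$. The plan is to reduce the nilpotence question for $\L\otimes\U$ to the same question for $\L'\times\U$, and then to show that $\L'\times\U$ is $k$-(super)nilpotent whenever $\U$ is.

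For the second step, $\L'$ is abelian (shown after Definition~\ref{def:elprime}), so it is $k$-nilpotent for every $k\ge 1$. It is also $k$-supernilpotent, because the higher commutators of an abelian algebra in a congruence modular variety vanish. In a congruence modular variety, finite direct products of $k$-nilpotent algebras are $k$-nilpotent, and the same holds for $k$-supernilpotent ones, since the (higher) commutator is computed coordinatewise on product congruences. Hence $\L'\times\U$ is $k$-(super)nilpotent.

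The main step, and the expected obstacle, is transferring $k$-(super)nilpotence from $\L'\times\U$ to $\L\otimes\U$. I would use that, in congruence modular varieties, $k$-nilpotence and $k$-supernilpotence of an algebra are determined by identities and term conditions involving its term operations. Nilpotence is a term condition on $\Clo$, and supernilpotence of class $k$ is the vanishing of the $(k+1)$-ary higher commutator, which is also a term condition. One route is that the variety generated by a $k$-(super)nilpotent algebra consists of $k$-(super)nilpotent algebras when it is congruence modular; then $\L\otimes\U\in\V(\L'\times\U)$ finishes the proof. The cleaner route I would try first is direct: the term condition $C(1,\dots,1;0)$ for $\A$ says that certain implications $t(\bar a,\bar x)=t(\bar a,\bar y)\Rightarrow t(\bar b,\bar x)=t(\bar b,\bar y)$ hold for all terms $t$. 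Equivalently, $\Clo(\A)$ satisfies the condition that certain pairs of derived term operations agree. Because these conditions are expressed through equalities of term operations of the clone (a nilpotence identity-style characterization, e.g.\ via commutator terms in modular varieties), they are preserved by the clone isomorphism.

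The delicate point is that term conditions quantify over elements, not just over equalities of term operations, so I would check carefully that a clone isomorphism preserves them. For $k$-nilpotence in congruence modular varieties, I would invoke the standard fact that a variety is $k$-nilpotent if and only if every member is, so this follows immediately from equality of varieties. For supernilpotence I would likewise use that the class of $k$-supernilpotent algebras in a congruence modular variety is closed under H, S and P, since it is characterized by commutator term identities. The abelian case is the instance $k=1$.
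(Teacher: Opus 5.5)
Your proof is correct and follows essentially the paper's argument: injectivity together with Lemma~\ref{lem:clonehom} makes $\xi$ a clone isomorphism, Theorem~\ref{ALV:clone} gives $\V(\L\otimes\U)=\V(\L'\times\U)\subseteq\V$, and closure of $k$-(super)nilpotent algebras under $\mathbb{H}$, $\mathbb{S}$, $\mathbb{P}$ in a congruence modular variety finishes it, with your explicit check that $\L'\times\U$ is $k$-(super)nilpotent filling in a step the paper leaves implicit. The tentative direct route, in which the clone isomorphism is claimed to preserve term conditions, is unnecessary and is not justified by the clone isomorphism alone. You can drop it and keep only the $\mathbb{HSP}$ argument.
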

\begin{proof}
  Let $\L'$ be as in Lemma~\ref{lem:clonehom}.
  Since $\xi$ is a injective and surjective, 
    $\xi \colon \Clo(\L \otimes \U) \rightarrow \Clo(\L' \times \U)$ is a clone isomorphism.
  By Theorem~\ref{ALV:clone}, $\V(\L \otimes \U) = \V(\L' \times \U)$.
  In particular, $\L \otimes \U \in \mathbb{HSP}(\L' \times \U)$.
  Since quotients, subalgebras, and products of $k$-(super)nilpotent algebras in a congruence modular variety 
  remain $k$-(super)nipotent, the result follows. 
\end{proof}

\begin{exa} 
  We show that for abelian $\L$ and $\U$, the central extension $\L \otimes \U$ may be abelian even if $\xi$ is not injective. % 
  Let $\L = (\Z_2, +) = \U$. 
  Define $\L \otimes \U = (L \times U, p)$ 
    where 
    \[p^{\L \otimes \U}((\ell_1, u_1) , (\ell_2, u_2))= (\ell_1 + \ell_2 + \hat{p}(u_1,u_2), u_1 + u_2)\]
    with 
    \[ \hat{p} \colon U^2 \rightarrow L, \quad  (u_1, u_2) \mapsto 
      \begin{cases} 
        1  & \text{ if } u_1 = u_2 = 1, \\ 
        0 & \text{ else.} 
      \end{cases}\]
  Then $\L \otimes \U$ is a cyclic abelian group, and hence isomorphic to $(\Z_4, +)$.

  Note that $\xi \colon \Clo(\L \otimes \U) \rightarrow \Clo(\L \times \U)$ is not injective here. 
  So for abelian $\L$ and $\U$, we see that $\L \otimes \U$ may be abelian even if  $\L \otimes \U \ncong \L' \times \U$.
\end{exa}

To capture the situation when $\xi$ is not injective we define the \emph{difference clonoid} of $\L\otimes\U$ as follows.
 
\begin{definition} \label{def:differenceclonoid} 
  Let $\V$ be a congruence modular variety and let $\L \otimes \U$ be a central extension of the abelian $\L$ by $\U$.
  Let  $\L'$ be as in Definition~\ref{def:elprime}.
  For $0 \in L$ we define the \emph{difference clonoid} from $\U$ to $\L'_0$, the algebra $\L'$ expanded by $0$, as
    \[ D(\L\otimes\U) := \{ e\colon U^k\to L \st k\in\N, \,\, x_1+e \in\Clo_k(\L\otimes\U) \}. \]
\end{definition}

We prove that $D(\L\otimes\U)$ is indeed a clonoid in the following.
 
\begin{lemma} \label{lem:diff} 
  Let $\V$ be a congruence modular variety and let $\L \otimes \U \in \V$ be a central extension of $\L$ by $\U$. 
  Let $0 \in L$ and let $\xi$ be as in Lemma~\ref{lem:clonehom}.  
  Let  $\L'$ be as in Definition~\ref{def:elprime}.
  \begin{enumerate}
    \item  \label{it:addon}
      For $k\in\N$ and $e\colon U^k\to L$ the following are equivalent:
      \begin{enumerate} 
        \item $e\in D(\L\otimes\U)$;
        \item $f+e \in\Clo_k(\L\otimes\U)$ for all $f\in\Clo_k(\L\otimes\U)$;
        \item $f+e \in\Clo_k(\L\otimes\U)$ for some $f\in\Clo_k(\L\otimes\U)$;
        \item $(f, f+e) \in \ker(\xi)$ for all $f \in \Clo_k(\L \otimes \U)$.
      \end{enumerate}   
    \item \label{it:clonoid}
      $D(\L\otimes\U)$ is a clonoid from $\U$ to $\L'$ expanded with $0$, in particular to $(L,+,-,0)$.
    \item \label{it:zero} 
      $\xi$ is a clone isomorphism if and only if
        \[D(\L\otimes\U) = \{ f \colon U^k \rightarrow L \st k \in \N, \,  f(x) = 0 \text{ for all } x \}.\]
  \end{enumerate}
\end{lemma}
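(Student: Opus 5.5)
The proof rests on one structural observation. If $f,g\in\Clo_k(\L\otimes\U)$ satisfy $\xi(f)=\xi(g)$, then $f$ and $g$ differ only by an added function $U^k\to L$. Indeed, $f^\U=g^\U$ and $f^{\L'}=g^{\L'}$. Since $f^\L(\ell)=f^{\L'}(\ell)+f^\L(0,\dots,0)$, the first coordinates of $f$ and $g$ differ by
\[ (\ell,u)\mapsto f^\L(0,\dots,0)+\hat f(u)-g^\L(0,\dots,0)-\hat g(u), \]
which depends only on $u$. Conversely, $\xi(f+e)=\xi(f)$ whenever $f+e$ is a term function, because adding $e$ changes neither the $\U$-part nor the $\L'$-part of the first coordinate. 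This is the content of the well-definedness computation in Lemma~\ref{lem:clonehom}. I will use it in both directions throughout.

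For item~(\ref{it:addon}), (b)$\Rightarrow$(a) takes $f=x_1$, and (a)$\Rightarrow$(c) likewise takes $f=x_1$. For (c)$\Rightarrow$(b), suppose $f_0+e$ is a term function and let $f$ be arbitrary. I will write $f+e$ using the difference term $d$, which acts on $\L\otimes\U$ as a term operation with first coordinate $d^\L(\ell_1,\ell_2,\ell_3)+\hat d(u_1,u_2,u_3)$, where $d^\L(x,y,z)=x-y+z$. Then
\[ d^{\L\otimes\U}(f+e,\,f_0,\,f) \]
has $\U$-coordinate $d^\U(f_0^\U,f_0^\U,f^\U)=f^\U$ and $\L$-coordinate
\[ (f_0+e)-f_0+f+\hat d\bigl(f_0^\U(u),f_0^\U(u),f^\U(u)\bigr). \]
The correction $u\mapsto\hat d(v,v,w)$ is itself a distortion: it is what $d(x_1,x_1,x_2)$ contributes. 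Since $d^{\L\otimes\U}(x,x,y)=y$ in $\V$, we have $\hat d(v,v,w)=-d^\L(0,0,0)=0$ in the normalized form; more carefully, $d^{\L\otimes\U}(a,a,b)=b$ gives $\hat d(v,v,w)=0$ directly. Hence the composite equals $f+e$, so (c)$\Rightarrow$(b). I expect this bookkeeping with $\hat d$, namely checking that the identity $d(x,x,y)=y$ holding in $\L\otimes\U\in\V$ kills the extra term, to be the main technical point.

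The equivalence (b)$\Leftrightarrow$(d) is exactly the structural observation: by (b), the pair $(f,f+e)$ lies in $\Clo$, and $\xi$ identifies its two entries; conversely, (d) presupposes that $f+e$ is in the clone.

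For item~(\ref{it:clonoid}), closure under $\Clo(\U)$ on the inside goes as follows. Given $e\in D$ and $g_1,\dots,g_k\in\Clo^m(\U)$, I realize each $g_i$ as $t_i^\U$ and compose $x_1+e$ with the terms $(x_1,t_2,\dots)$; more simply, I compose $x_1+e$ with $t_1,\dots,t_k$ and apply (c)$\Rightarrow$(a). Closure under $\Clo(\L'_0)$ on the outside follows from (b). Sums and differences arise by composing $d$ with $x_1+e_1$, $x_1$, and $x_1+e_2$. The constant $0$ lies in $D$ trivially. A basic operation $h^{\L'}$ applied to $e_1,\dots,e_n$ is handled by $h^{\L\otimes\U}(x_1+e_1,\dots,x_1+e_n)$ compared with $h^{\L\otimes\U}(x_1,\dots,x_1)$. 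Their difference is $h^\L(x+e)-h^\L(x)=h^{\L'}(e)$ by affineness of $h^\L$, and (c) then applies.

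Item~(\ref{it:zero}) follows from (d) together with the structural observation: $\ker\xi$ is trivial exactly when the only $e$ with $f+e\in\Clo$ is $e=0$.
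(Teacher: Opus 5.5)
Your proof is correct and follows essentially the same route as the paper. Everything rests on the identity $d^{\L\otimes\U}(f_0+e,f_0,f)=f+e$ (using $\hat d(v,v,w)=0$, which comes from $d(x,x,y)=y$). Closure of $D(\L\otimes\U)$ is obtained as in the paper: on the inside by composing $x_1+e$ with term functions, and on the outside by comparing $h^{\L\otimes\U}(x_1+e_1,\dots,x_1+e_n)$ with $h^{\L\otimes\U}(x_1,\dots,x_1)$.

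One small correction: the first argument in your display $d^{\L\otimes\U}(f+e,f_0,f)$ should read $f_0+e$, which is what your coordinate computation already assumes. Your explicit observation that pairs in $\ker\xi$ differ by an added map $U^k\to L$ spells out what the paper leaves implicit when it derives (d) and item~(3) from that identity.
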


\begin{proof}
  Let $D := D(\L\otimes\U)$, and let $d$ be a difference term for $\V$.
  Since $\L$ is abelian, $d^\L(x,y,z) = x-y+z$.
  Then \[d^{\L \otimes \U}((\ell_1,u_1),(\ell_2,u_2),(\ell_3,u_3)) = (\ell_1-\ell_2+\ell_3 + \hat{d}(u_1,u_2,u_3), d^\U(u_1,u_2,u_3)).\]
  Since $d$ is a difference term, $\hat{d}(u_1,u_1,u_2) = 0 = \hat{d}(u_2,u_1,u_1)$ for all $u_1,u_2\in U$.

  \eqref{it:addon}
    Let $f,g\in\Clo_k(\L\otimes\U)$ such that $f+e\in\Clo_k(\L\otimes\U)$. 
    We claim 
      \begin{equation} \label{eq:dfe}
        d^{\L\otimes\U}(f+e,f,g) = g+e.
      \end{equation}

  To show~\eqref{eq:dfe} let $\ell\in L^k,u\in U^k$ and consider 
    \begin{align*}
      d^{\L\otimes\U}&(f+e,f,g) (\ell,u)  \\
      = & (d^\L(f^\L(\ell)+\hat{f}(u)+e(u),f^\L(\ell)+\hat{f}(u),g^\L(\ell)+\hat{g}(u)) \\
      & \quad \quad +\underbrace{\hat{d}(f^\U(u),f^\U(u),g^\U(u))}_{=0},  d^\U(f^\U(u),f^\U(u),g^\U(u)) ) \\
      = & (g^\L(\ell)+\hat{g}(u)+e(u), g^\U(u)) \\
      =  & (g+e)(\ell,u). 
    \end{align*}
  All equivalences in \eqref{it:addon} now follow from~\eqref{eq:dfe}.

  \eqref{it:clonoid}
    From~\eqref{it:addon} it follows that $D$ is closed under $+,-,0$ on $L$. 
    More generally, let $f\in F$ be $k$-ary  and
      let $e_1,\dots,e_k\in D$ be $n$-ary. 
    Let $x_i = (\ell_i, u_i) \in L \times U$ for $1 \le i \le n$ and let $u = (u_1, \dots, u_n) \in U^n$.
    Since $\L$ is abelian, we have
    \begin{align*}
      f^{\L\otimes \U} & (x_1+e_1(u), \dots, x_1 +e_k(u)) \\
      &= ( f^\L(\ell_1 + e_1(u), \dots \ell_1 + e_k(u)) + \hat{f}(u_1, \dots, u_1) , f^\U(u_1, \dots, u_1) ) \\
      & =  ( f^\L(\ell_1 - 0 + e_1(u), \dots , \ell_1 - 0 + e_k(u)) + \hat{f}(u_1, \dots, u_1) , \\
      & \quad \quad f^\U(u_1, \dots, u_1) ) \\ 
      & = ( f^\L(\ell_1, \dots, \ell_1) - f^\L(0, \dots, 0)  \\
      & \hspace{1in}+ f^\L(e_1(u), \dots, e_k(u)) + \hat{f}(u_1, \dots, u_1), \\
      & \quad \quad \quad \quad \quad \quad \quad f^\U(u_1, \dots, u_1))\\
      & =  f^{\L\otimes \U} (x_1, \dots, x_1) - f^\L(0, \dots, 0) + f^\L(e_1(u), \dots, e_k(u)) \\
      &  = f^{\L\otimes \U} (x_1, \dots, x_1)  +f^{\L'} (e_1(u), \dots ,e_k(u)).
    \end{align*}
  Hence by \eqref{it:addon},
    $f^{\L'} (e_1,...,e_k) \in D$.
  Thus $\Clo(\L')D \subseteq D$.

  To see $D\Clo(\U) \subseteq D$, let $e\in D$ be $k$-ary, and let $f_1,\dots,f_k\in\Clo(\L\otimes\U)$ be $n$-ary.
  Then $x_1+e$ and consequently $(x_1+e)(f_1,\dots,f_k)$ is a term function of $\L\otimes\U$.
  For $\ell\in L^n,u\in U^n$ we see that
    \begin{align*}
      (x_1+e)(f_1,\dots,f_k)(\ell,u) & = (f^\L_1(\ell)+ e(f_1^\U(u),\dots,f^\U_k(u)), f_1^\U(u) ) \\
      & = (f_1+ e(f_1^\U,\dots,f^\U_k)) (\ell,u).           
    \end{align*}
  From~\eqref{it:addon} it follows that $e(f_1^\U,\dots,f^\U_k)  \in D$ and $D$ is a clonoid.
 
  \eqref{it:zero} 
    Follows from \eqref{it:addon}.
\end{proof}

Next we determine a set of generators for the term clone of $\L \otimes \U$. 

\begin{theorem} \label{thm:clonegens}
  Let $\L,\U$ be algebras in a congruence modular variety $\V$ with difference term $d$, 
    let $\L\otimes\U$ be a central extension of $\L$ by $\U$ and let $\xi$ be the clone 
    homomorphism from $\Clo(\L\otimes\U)$ onto $\Clo(\L'\times\U)$ from Lemma~\ref{lem:clonehom}.
 
  Let $G \subseteq \Clo(\L\otimes\U)$ such that $\xi(G)$ generates $\Clo(\L'\times\U)$.
  Let $0 \in L$ and let $E$ be a generating set of
    the clonoid $D(\L\otimes\U)$ from $\U$ to $(L, +, -, 0)$.
  Then
    \[ \Clo(\L\otimes\U) = \langle \{d^{\L\otimes\U}\} \cup G \cup (x_1+E) \rangle. \]
\end{theorem}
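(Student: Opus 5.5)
The inclusion ``$\supseteq$'' is immediate. By assumption $G \subseteq \Clo(\L\otimes\U)$, and $d^{\L\otimes\U}$ is a term function. For $e \in E \subseteq D(\L\otimes\U)$ we have $x_1+e \in \Clo(\L\otimes\U)$ by the very definition of the difference clonoid (Definition~\ref{def:differenceclonoid}). So let $C := \langle \{d^{\L\otimes\U}\} \cup G \cup (x_1+E) \rangle$. The work is in showing that every $t \in \Clo_k(\L\otimes\U)$ lies in $C$. I plan to do this in two steps. First, reduce $t$ to an element of $C$ modulo $\ker(\xi)$. Second, show that $C$ absorbs all ``difference'' corrections, i.e.\ $f+e\in C$ whenever $f \in C$ has arity $k$ and $e \in D(\L\otimes\U)$ is $k$-ary.

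For the first step, observe that $\xi$ is a clone homomorphism, so $\xi(C)$ is a subclone of $\Clo(\L'\times\U)$ containing $\xi(G)$. It therefore contains $\langle \xi(G)\rangle = \Clo(\L'\times\U)$. Hence there is $f \in C$ with $\xi(f) = \xi(t)$. Now $f$ and $t$ agree on the $\U$-coordinate, and their $\L$-parts agree up to a function of $u$ alone. To see this, argue as in the proof of Lemma~\ref{lem:clonehom}: equality of $\xi$-images means $f^{\L}-f^\L(0,\dots,0) = t^\L - t^\L(0,\dots,0)$ and $f^\U=t^\U$. So $t = f + e$ with
\[ e(u) := t^\L(0,\dots,0)+\hat t(u) - f^\L(0,\dots,0) - \hat f(u), \]
a map $U^k\to L$. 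Since $f$ and $f+e=t$ both lie in $\Clo_k(\L\otimes\U)$, Lemma~\ref{lem:diff}\eqref{it:addon}(c) gives $e\in D(\L\otimes\U)$.

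For the second step, let $D_C := \{ e \colon U^k \to L \st f+e \in C \text{ for all } f\in C^{(k)}\}$. I will show $D_C$ is a $(\U,(L,+,-,0))$-clonoid containing $E$; then $D(\L\otimes\U)=\langle E\rangle \subseteq D_C$ and we are done. The key tool is identity~\eqref{eq:dfe}, $d^{\L\otimes\U}(f+e,f,g) = g+e$, whose derivation only used $\hat d(u,u,v)=0$ and so holds for arbitrary functions $e$. Using it, it suffices to know $f+e\in C$ for a single $f\in C^{(k)}$, since then $g+e = d(f+e,f,g)\in C$ for all $g\in C^{(k)}$. Now I verify the closure properties. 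First, $E\subseteq D_C$, because $x_1+e \in C$. Second, $0\in D_C$ trivially. Third, if $e_1,e_2\in D_C$ then $f+e_1+e_2 = (f+e_1)+e_2\in C$, so sums are in $D_C$. Fourth, for negation use $d(f,f+e,f) = f-e$ (same computation), so $-e \in D_C$. Finally, for precomposition with $h_1,\dots,h_k \in \Clo^n(\U)$, pick $f_i\in C$ with $f_i^\U = h_i$; this is possible since $\xi(C)=\Clo(\L'\times\U)$ surjects onto $\Clo(\U)$. Then
\[ (x_1+e)(f_1,\dots,f_k) = f_1 + e(h_1,\dots,h_k) \in C, \]
exactly as in the proof of Lemma~\ref{lem:diff}\eqref{it:clonoid}. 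Hence $e(h_1,\dots,h_k)\in D_C$. Note that closure under $\Clo(L,+,-,0)$ amounts to closure under sums, negatives and the constant $0$, which are exactly the properties checked above.

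The main obstacle I expect is bookkeeping in the second step. The clonoid $D(\L\otimes\U)$ is generated as a clonoid to $(L,+,-,0)$, so I must make sure every generating operation of that target clone, as well as every source-side composition, is realized inside $C$ and not merely inside $\Clo(\L\otimes\U)$. The composition step is the delicate one, since it requires lifting arbitrary $\U$-terms to elements of $C$. The surjectivity $\xi(C)=\Clo(\L'\times\U)$, established in the first step, handles that lifting.
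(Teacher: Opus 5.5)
Your proof is correct and follows the paper's argument: lift $\xi(t)$ into the generated clone, write $t$ as that lift plus an element of $D(\L\otimes\U)$, and show the set of corrections absorbed by the generated clone is a clonoid containing $E$, using $d(f+e,f,g)=g+e$ together with lifting $\U$-terms through $\xi$. Your set $D_C$, with its ``for all $f$'' clause, coincides with the paper's set $\{e \st x_1+e\in\langle S\rangle\}$. Defining it that way makes explicit a step the paper leaves implicit in its precomposition argument: passing from $q_1+e(p_1^\U,\dots,p_k^\U)$ back to $x_1+e(p_1^\U,\dots,p_k^\U)$.
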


\begin{proof}
  The inclusion $\supseteq$ is clear since $S :=  \{d^{\L\otimes\U}\} \cup G \cup (x_1+E)$ 
    is a set of term functions of $\L\otimes\U$.
  For the converse inclusion let $t\in\Clo(\L\otimes\U)$. 
  By assumption we have $s\in\langle G\rangle$ such that $\xi(s) = \xi(t)$. 
  Hence by Lemma~\ref{lem:diff} we have $e\in D(\L\otimes\U)$ such that
    \[ t=s+e = d^{\L\otimes\U}(x_1+e,x_1,s). \]
  Thus $t\in\langle S\rangle$ follows once we have proved that
    \begin{equation} \label{eq:x1+D} 
      x_1+D(\L\otimes\U) \subseteq \langle S\rangle.
    \end{equation}
  For this we show that
    \[ D := \{e\in D(\L\otimes\U) \st x_1+e \in \langle S\rangle \} \]
    is a clonoid from $\U$ to $(L,+,-,0)$. 
  Clearly $0\in D$. For $e_1,e_2\in D$ we see from
    \[ d^{\L\otimes\U}(x_1+e_1, x_1, x_1+e_2) = x_1+(e_1+e_2),\quad d^{\L\otimes\U}(x_1,x_1+e_1,x_1) = x_1-e_1 \]
    that $D$ is closed under $+,-$ on the outside.

  Next let $e\in D$ be $k$-ary and let $p_1^\U,\dots,p_k^\U$ be $n$-ary term functions of $\U$. 
  Then there exist $n$-ary functions $q_1,\dots,q_k\in\langle G\rangle$ such that
    \[q_{i}(\ell,u) = (p_{i}^\L(\ell)+ \hat{p}_{i}(u), p_{i}^\U(u)) \text{ for all } i\in [k],\ell\in L^n,u\in U^n.\]
  Moreover, as in the proof of Lemma~\ref{lem:diff} item \ref{it:clonoid}, we see that
    \[ (x_1+e)(q_1,\dots,q_k) = q_1+e(q_{1},\dots,q_{k}) = q_1+e(p_1^\U,\dots,p_k^\U). \]
  Hence $D \Clo(\U) \subseteq D$. Thus $D$ is a clonoid from  $\U$ to $(L,+,-,0)$. Since $E\subseteq D$ by definition,
    $D=D(\L\otimes\U)$ and~\eqref{eq:x1+D} is proved. The result follows.
\end{proof}

\begin{cor} \label{cor:fingen} 
  Let $\V$ be a congruence modular variety and let $\L \otimes \U \in \V$ be a central 
    extension of $\L$ by $\U$.
  If $\Clo(\L' \times \U)$ is finitely generated and $D(\L \otimes \U)$ is finitely generated 
    then $\Clo(\L \otimes \U)$ is finitely generated. 
\end{cor}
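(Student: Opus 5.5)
This follows directly from Theorem~\ref{thm:clonegens}. The plan is to produce a finite set $G \subseteq \Clo(\L\otimes\U)$ whose image under $\xi$ generates $\Clo(\L'\times\U)$, and a finite generating set $E$ of the clonoid $D(\L\otimes\U)$. Once both are in hand, Theorem~\ref{thm:clonegens} gives
\[ \Clo(\L\otimes\U) = \langle \{d^{\L\otimes\U}\} \cup G \cup (x_1+E) \rangle, \]
and this is a finite generating set.

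First, since $\Clo(\L'\times\U)$ is finitely generated, I fix a finite set $H \subseteq \Clo(\L'\times\U)$ that generates it. By Lemma~\ref{lem:clonehom}, $\xi$ is surjective, so for each $h \in H$ I can choose a preimage $g_h \in \Clo(\L\otimes\U)$ with $\xi(g_h) = h$. Concretely, $h = f^{\L'\times\U}$ for some $F$-term $f$, and I take $g_h = f^{\L\otimes\U}$. Setting $G := \{g_h \st h \in H\}$ gives a finite set with $\xi(G) = H$, so $\xi(G)$ generates $\Clo(\L'\times\U)$.

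Second, by hypothesis $D(\L\otimes\U)$ is finitely generated, so I fix a finite $E$ with $\langle E\rangle = D(\L\otimes\U)$. Here I need to be careful about which clonoid structure the hypothesis refers to. Theorem~\ref{thm:clonegens} asks for a generating set as a clonoid from $\U$ to $(L,+,-,0)$, whereas Definition~\ref{def:differenceclonoid} presents $D(\L\otimes\U)$ as a clonoid to $\L'_0$. Since $d^{\L'}(x,y,z) = x-y+z$ and $0$ is a constant of $\L'_0$, the operations $+,-,0$ are all term operations of $\L'_0$. Hence $\Clo(L,+,-,0) \subseteq \Clo(\L'_0)$, and every $(\U,(L,+,-,0))$-clonoid generated by $E$ is contained in the $(\U,\L'_0)$-clonoid generated by $E$.

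The remaining point, and the only step needing any argument, is the converse: a set $E$ that generates $D(\L\otimes\U)$ as a $(\U,\L'_0)$-clonoid also generates it as a $(\U,(L,+,-,0))$-clonoid. This is exactly what the proof of Theorem~\ref{thm:clonegens} establishes. It shows that
\[ \{e \in D(\L\otimes\U) \st x_1+e \in \langle S\rangle\} \]
is closed under $+,-,0$ and under precomposition by $\Clo(\U)$, and it contains $E$. Moreover, applying the computation in Lemma~\ref{lem:diff}\eqref{it:clonoid} with $f \in G$ shows that this set is also closed under the operations of $\L'$. So the proof of Theorem~\ref{thm:clonegens} works equally well for a generating set in either sense, and $\{d^{\L\otimes\U}\}\cup G\cup(x_1+E)$ is a finite generating set of $\Clo(\L\otimes\U)$.
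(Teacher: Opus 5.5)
Your proposal is correct and matches the paper, where the corollary is an immediate consequence of Theorem~\ref{thm:clonegens}: take $G$ to be finitely many $\xi$-preimages of generators of $\Clo(\L'\times\U)$ and $E$ a finite generating set of $D(\L\otimes\U)$. Your extra care about the target algebra is a reasonable addition, but the claim as you phrase it overstates what your argument gives: it does not show that $E$ generates $D(\L\otimes\U)$ as a $(\U,(L,+,-,0))$-clonoid, which need not hold in general. What it does show, using that the $\L'$-coordinates of $\xi(G)$ generate $\Clo(\L')$, is that $\{e \st x_1+e\in\langle S\rangle\}$ is a $(\U,\L'_0)$-clonoid, and that is all you need.
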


Combining this with Theorem~\ref{thm:distributive} and the observation that finite abelian Mal'cev 
  algebras have term clones generated by their ternary functions we obtain the following.

\begin{cor} \label{cor:termclonefg} 
  Let $\V$ be a congruence modular variety and let $\L \otimes \U \in \V$ be a central extension 
    of $\L$ by $\U$ such that $L$ and $U$ are of coprime order. 
  Further suppose that $\U$ is polynomially equivalent to a distributive module over a ring whose 
  Jacobson radical has nilpotence degree $n$.
  Then $\Clo(\L \otimes \U)$ is generated by its functions of arity $\text{max}(3, n+1)$.
\end{cor}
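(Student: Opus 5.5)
We apply Theorem~\ref{thm:clonegens} with a suitable generating set $G$ and a suitable generating set $E$ of the difference clonoid, and bound the arities of both. Let $N := \max(3,n+1)$. We must show that every term function of $\L\otimes\U$ lies in the clone generated by $\Clo^{N}(\L\otimes\U)$. Since $\Clo^{N}(\L\otimes\U)$ is closed under identifying variables, it contains all term functions of arity at most $N$, and in particular the ternary difference term operation $d^{\L\otimes\U}$.

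The first step is the choice of $G$. The algebra $\L'\times\U$ lies in $\V$ and is abelian, being a product of the abelian algebras $\L'$ and $\U$. (Here $\U$ is abelian, since it is polynomially equivalent to a module.) Hence $\L'\times\U$ is an abelian Mal'cev algebra with Mal'cev operation $d^{\L'\times\U}$. By the observation preceding the corollary, $\Clo(\L'\times\U)$ is generated by its ternary functions. By Lemma~\ref{lem:clonehom}, $\xi$ is surjective and preserves arities, so every ternary term function of $\L'\times\U$ is $\xi(g)$ for some ternary $g\in\Clo(\L\otimes\U)$. Take $G := \Clo^{3}(\L\otimes\U)$. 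Then $\xi(G) = \Clo^{3}(\L'\times\U)$, which generates $\Clo(\L'\times\U)$, so $G$ satisfies the hypothesis of Theorem~\ref{thm:clonegens}.

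The second step is the choice of $E$. By Lemma~\ref{lem:diff}\eqref{it:clonoid}, $D(\L\otimes\U)$ is a clonoid from $\U$ to $\L'_0$, and hence in particular to the abelian group $(L,+,-,0)$, which is polynomially equivalent to a $\mathbb{Z}$-module. It is also a clonoid from any algebra with fewer term operations than $\U$ on the source side. The intended application of Theorem~\ref{thm:distributive} uses $\A=\U$, which is polynomially equivalent to a finite distributive $\R$-module whose Jacobson radical has nilpotence degree $n$, and $\B = (L,+,-,0)$ viewed as a $\mathbb{Z}_m$-module with $m=|L|$. Since $\gcd(|U|,|L|)=1$, $|U|$ is invertible in $\mathbb{Z}_m$. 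The subtle point is that Theorem~\ref{thm:distributive} concerns clonoids whose source side is closed under $\Clo(\A)$ and whose target side is closed under $\Clo(\B)$. Our $D(\L\otimes\U)$ is closed under $\Clo(\U)$ on the source side, but the theorem is stated for $\A$ polynomially equivalent to the module. I expect this to be the main obstacle. The cleanest fix is to check that $D(\L\otimes\U)$ is closed under composition with polynomial operations of $\U$, not only term operations, in the source. This does not follow from the definition as stated, so instead I would verify that the proof of Theorem~\ref{thm:distributive} only uses $\Clo(\A)\supseteq$ the idempotent module operations. Alternatively, I would note that the theorem's phrase ``$\A$ polynomially equivalent'' is meant to include the term-clone case, since it explicitly allows $\A$ to merely be polynomially equivalent rather than equal to a module.

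Granting this, Theorem~\ref{thm:distributive}(1) gives that $D(\L\otimes\U)$ is generated by its $(n+1)$-ary members. Take $E := D(\L\otimes\U)^{(n+1)}$. For each $e\in E$, the function $x_1+e$ is an $(n+1)$-ary term function of $\L\otimes\U$ by the definition of the difference clonoid. Therefore the generating set $\{d^{\L\otimes\U}\}\cup G\cup(x_1+E)$ of Theorem~\ref{thm:clonegens} consists of term functions of arity at most $N=\max(3,n+1)$, and hence $\Clo(\L\otimes\U)$ is generated by $\Clo^{N}(\L\otimes\U)$.

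Finally, padding with dummy variables converts functions of smaller arity into $N$-ary term functions, and identifying variables recovers them. So generation by the term functions of arity at most $N$ is the same as generation by the $N$-ary term functions, which completes the argument.
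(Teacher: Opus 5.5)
Your proposal is correct and is the paper's own argument: apply Theorem~\ref{thm:clonegens} with $G$ the ternary term functions, which works because the abelian Mal'cev clone $\Clo(\L'\times\U)$ is generated by its ternary members, and with $E$ the $(n+1)$-ary part of $D(\L\otimes\U)$ supplied by Theorem~\ref{thm:distributive}. The ``obstacle'' you flag is not one: Theorem~\ref{thm:distributive} is about $(\A,\B)$-clonoids, which are closed only under $\Clo(\A)$, so it applies verbatim with $\A:=\U$ and $\B:=(L,+,-,0)$ viewed as a $\Z_{|L|}$-module. Your first suggested fix, closure under polynomial operations of $\U$, is unnecessary and fails in general; for example, the difference clonoid of $(\Z_4,+)\cong\Z_2\otimes\Z_2$ contains no nonzero constant functions.
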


Hence we obtain the main theorem of this section.

\mainthm*

\begin{proof} 
  If $n$ is not squarefree the number of $2$-nilpotent Mal'cev algebras is countably infinite.
  This follows from \cite{Id:CCMO}.

  Let $n$ be squarefree and let $\A$ be a $2$-nilpotent Mal'cev algebra of order $n$. 
  Then $\A \cong \L \otimes \U$ for abelian Mal'cev algebras $\L$ and $\U$. 
  Since $\L$ and $\U$ are abelian, $\Clo(\L)$ and $\Clo(\U)$ are both generated by their ternary functions. 
  Hence $\Clo(\L' \times \U)$ is generated by its ternary functions.
  Since $n$ is squarefree, $|L|$ and $|U|$ are coprime and squarefree. 
  Hence $\U$ is polynomially equivalent to a finite distributive module over a direct 
    product of finite fields of prime order. 
  By Theorem~\ref{thm:distributive} every clonoid from $\U$ to $\L$ is generated by its binary functions.
  In particular, $D(\L \otimes \U)$ is generated by its binary functions. 
  By Corollary~\ref{cor:fingen}, $\Clo(\L \otimes \U)$ is generated by its ternary term functions.
  As there are only finitely many ternary functions on a finite set, there are finitely 
    many nilpotent Mal'cev clones on $\{1,2,\ldots,n\}$. 
\end{proof}

Since nilpotent algebras in a congruence modular variety are congruence uniform, 
  the nilpotence degree of an algebra of squarefree order $n$ is at most the number of 
  prime divisors of $n$. 
So we obtain the following corollary.

\begin{cor} 
  Let $p \neq q$ be primes. 
  Then the number of nilpotent Mal'cev algebras on $\{1,2,\ldots,pq\}$ (up to term equivalence) is finite.
\end{cor}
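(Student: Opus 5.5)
The plan is to induct on the length of a central series, using Theorem~\ref{finitelymany} and its proof method at each step. A nilpotent Mal'cev algebra $\A$ on $\{1,\dots,pq\}$ generates a congruence modular, indeed congruence permutable, variety. Nilpotent algebras in such a variety are congruence uniform, so every congruence of $\A$ has blocks of equal size.

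\textbf{Step 1: $\A$ is at most $2$-nilpotent.} Take a central series $0_\A = \alpha_0 < \alpha_1 < \dots < \alpha_k = 1_\A$ with $k$ minimal. By congruence uniformity, each strict step $\alpha_i < \alpha_{i+1}$ multiplies the block size by an integer factor greater than $1$. The block size goes from $1$ to $pq$, and $pq$ has only two prime factors, so there are at most two strict steps; hence $k \le 2$. With the lower central series $1_\A \ge [1_\A,1_\A] \ge [1_\A,[1_\A,1_\A]] \ge \dots$, the same count applies: it is strictly decreasing until it reaches $0_\A$, which it does since $\A$ is nilpotent, so it reaches $0_\A$ within two steps. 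Therefore $[1_\A,[1_\A,1_\A]] = 0_\A$, that is, $\A$ is $2$-nilpotent.

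\textbf{Step 2: apply Theorem~\ref{finitelymany}.} Since $pq$ is squarefree, there are only finitely many $2$-nilpotent Mal'cev algebras on $\{1,\dots,pq\}$ up to term equivalence. Every nilpotent Mal'cev algebra on this set lies in that family. Abelian algebras are included, since they are trivially $2$-nilpotent, and nilpotence is a property of the term clone. This gives the corollary.

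\textbf{Main obstacle.} The delicate point is the uniformity claim that each strict step in the central series multiplies the block size by an integer greater than $1$. I would justify it from congruence uniformity of nilpotent Mal'cev algebras: for $\alpha < \beta$, each $\beta$-block is a disjoint union of $\alpha$-blocks, and $\alpha$ restricted to $\beta$-blocks is uniform. This gives $|\beta\text{-block}| = m \cdot |\alpha\text{-block}|$ with $m \ge 2$.
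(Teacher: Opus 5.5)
Your proposal is correct and follows the paper's argument. The paper's one-line justification is that nilpotent algebras in a congruence modular variety are congruence uniform, so the nilpotence class is at most the number of prime factors of $pq$. Every nilpotent Mal'cev algebra on $\{1,\dots,pq\}$ is therefore $2$-nilpotent, and Theorem~\ref{finitelymany} applies; you have simply written out the block-size counting along the lower central series that the paper leaves implicit.
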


%%%%%%%%%%%%%%%     Section 4:  Subpower Membership Problem  %%%%%%%%%%%%%

\section{The Subpower Membership Problem}\label{section:SMP}

The subpower membership problem for an algebra $\A$, denoted $\SMP(\A)$, 
  asks if a given partial function from $A^n$ to $A$ can be interpolated by a term function of $\A$. 
An equivalent formulation of $\SMP(\A)$ is as follows:

\noindent $\text{SMP}(\A)$:

\noindent  \quad Input: $k, n \in \N$ and $a_1, \ldots, a_k, b \in A^n$.

\noindent \quad Problem: Decide if $ b \in \langle a_1,\ldots,a_k \rangle_{\A^n}$, 
                          the subalgebra of $\A^n$ generated by $\{a_1, \dots, a_k\}$.

The main step in finding a polynomial time algorithm for $\SMP(\A)$ 
  with $\A$ as in Theorem~\ref{thm:SMPmain} is a polynomial time reduction to the problem of 
  computing a compact representation of a particular subalgebra of a power of $\A$.
This is the prevailing strategy in, e.g. \cite{Ma:SMP} and \cite{BMS:SMP}.

Let $\A$ be a finite Mal'cev algebra, let $n \in \N$  and let $R \subseteq A^n$. 
Following \cite{BD:ASA}, we define the signature of $R$, denoted $\text{Sig}(R)$, as the set of 
  all triples $(i, a, b) \in \{1,2, \ldots, n\} \times A^2$ 
  such that there exist $t_a, t_b \in R$ with $t_a(j) = t_b(j)$ for all $j < i$ 
  and $t_a(i)=a, \, t_b(i)=b$. 
We say that $t_a$ and $t_b$ are witnesses for $(i,a,b) \in \text{Sig}(R)$. 
In \cite{BD:ASA}, Bulatov and Dalmau show that 
  for $R \le \A^n$, if $S \subseteq R$ is such that $\text{Sig}(S) = \text{Sig}(R)$, 
  then $S$ generates $R$. 
In fact, they prove that $R$ is the closure of $S$ with respect to just the Mal'cev term $m$ of $\A$. 

For $R \le \A^n$, we call $S \subseteq R$ a representation of $R$ if $\text{Sig}(S) = \text{Sig}(R)$, 
  and we call $S$ a compact representation of $R$ if $\text{Sig}(S) = \text{Sig}(R)$ 
  and $|S| \le 2|\text{Sig}(R)|.$ 
It can be shown that every subalgebra of a power of a finite Mal'cev algebra has a compact representation. 

In order to solve $\SMP(\A)$ in polynomial time, it suffices to compute a compact representation 
  of $R:= \langle a_1, \dots, a_k \rangle_{\A^n}$  in time polynomial in $n$ and $k$. 
So, following \cite{BD:ASA}, we define the following problem for a finite Mal'cev algebra $\A$.

\noindent $\text{CompRep}(\A)$:

\noindent \quad  Input: $a_1, \ldots, a_k \in A^n$

\noindent \quad Output: A compact representation of $\langle a_1,\ldots,a_k \rangle_{\A^n}.$

\begin{theorem}[cf. \cite{BD:ASA}] For $\A$ a finite Mal'cev algebra, $\textup{SMP}(\A)$ is polynomial time reducible to $\textup{CompRep}(\A).$
\end{theorem}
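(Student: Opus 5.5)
Given an instance $a_1,\dots,a_k,b \in A^n$ of $\SMP(\A)$, I would first call the $\text{CompRep}(\A)$ oracle on $a_1,\dots,a_k$. This returns a set $S \subseteq R := \langle a_1,\dots,a_k\rangle_{\A^n}$ with $\text{Sig}(S) = \text{Sig}(R)$ and $|S| \le 2|\text{Sig}(R)| \le 2n|A|^2$. The reduction then consists of deciding $b \in R$ from $S$ in time polynomial in $n$ (the constants depend only on $A$). Since $\A$ is fixed, the size of $S$ is linear in $n$, so the output of the oracle has polynomial size.

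To decide membership, I would use the signature to build $b$ coordinate by coordinate, following the standard Bulatov--Dalmau sifting argument. Inductively, for $i = 0,1,\dots,n$, I maintain a tuple $r_i \in R$ that agrees with $b$ on coordinates $1,\dots,i$, or else I conclude $b \notin R$. Start with $r_0$ equal to any element of $S$; if $S$ is empty, then $n$ or $k$ is trivial and the case is handled directly. For the step from $i-1$ to $i$, set $a := r_{i-1}(i)$. Then $(i,a,b(i)) \in \text{Sig}(R)$ is necessary for $b \in R$: indeed $r_{i-1}$ and $b$ would witness it, since they agree below $i$. So if this triple is not in $\text{Sig}(S) = \text{Sig}(R)$, which can be read off from $S$ in polynomial time by comparing pairs, I reject. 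Otherwise I pick witnesses $t_a, t_b \in S$ and set
\[ r_i := m(r_{i-1}, t_a, t_b), \]
computed coordinatewise with the Mal'cev operation $m$ of $\A$. This tuple lies in $R$ because $R$ is a subalgebra containing $S$. On coordinates $j < i$ we have $t_a(j) = t_b(j)$, so $r_i(j) = m(b(j), t_a(j), t_a(j)) = b(j)$. At coordinate $i$ we get $m(a,a,b(i)) = b(i)$. At the end $r_n = b$, so I accept.

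Each step costs $O(n)$ evaluations of the fixed operation $m$ plus a lookup in $S$, giving polynomial time overall. Correctness in the rejecting direction uses exactly the equality $\text{Sig}(S)=\text{Sig}(R)$; the accepting direction uses only $S \subseteq R$. The only point needing care is that $m$ is a term operation of $\A$, so that $R$ is closed under it; this holds by the definition of a Mal'cev algebra. Beyond that there is no real obstacle: the substance lies in producing compact representations, which is deferred to $\text{CompRep}(\A)$.
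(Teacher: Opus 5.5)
Your reduction is correct, and it is the standard Bulatov--Dalmau sifting argument that the paper relies on by citation. One oracle call yields $S$ with $\text{Sig}(S)=\text{Sig}(R)$, and your coordinate-by-coordinate construction $r_i := m(r_{i-1},t_a,t_b)$ both certifies membership and justifies rejection; this is the algorithmic content of the paper's remark that $R$ is the closure of $S$ under the Mal'cev term $m$.
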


We note that to solve $\text{CompRep}(\A)$ on input $a_1, \dots, a_k \in A^n$ 
  it suffices to find a representation 
  $S \subseteq R:= \langle a_1, \dots , a_k \rangle_{\A^n}$ 
  with $\text{Sig}(S) = \text{Sig}(R)$ which is of size  polynomial in $n$ and $k$. 
We can then pass from this representation to a compact representation $S' \subseteq S$ of $R$ 
  in time polynomial in $n$ and $k$.

Following Kompatscher \cite{Kom:SMP} we define a version of the compact representation problem 
  for clonoids. 
Let $C$ be a clonoid from algebra $\U$ to algebra $\L$.

\noindent $\text{CompRep}(C)$:

\noindent \quad Input: $a_1, \ldots, a_k \in U^n$.

\noindent  \quad Output: A compact representation of 
\[C( a_1,\ldots,a_k):= \{f(a_1, \ldots, a_k) \st f \in C^{(k)} \} \le \L^n.\]

In \cite{Kom:SMP} Kompatscher shows that for certain central extensions we can further reduce 
  the problem $\text{CompRep}(\A)$ to $\text{CompRep}(D)$ for $D$ the difference clonoid of $\A$.

\begin{theorem}\cite[Theorem 11]{Kom:SMP}  \label{thm:comprep}
Let $\A = \L \otimes \U$ be a finite Mal'cev algebra such that $\U$ is supernilpotent. 
Then $\textup{SMP}(\A)$ reduces in polynomial time to $\textup{CompRep}(D(\L \otimes \U))$.
\end{theorem}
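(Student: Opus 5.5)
The strategy is to separate the ``$\L'\times\U$-part'' of a term operation from its distortion, using the surjective clone homomorphism $\xi$ of Lemma~\ref{lem:clonehom} and the difference clonoid from Lemma~\ref{lem:diff}. Fix the input $a_i=(\ell_i,u_i)\in (L\times U)^n$ for $i\in[k]$ and $b=(b_L,b_U)$. Put $R:=\langle a_1,\dots,a_k\rangle_{\A^n}$ and $u:=(u_1,\dots,u_k)$. For a $k$-ary term $t$ set
\[ c_t(v):=t^\L(0,\dots,0)+\hat t(v), \qquad v\in U^k. \]
Then $t^{\A}(a)=\xi(t)(a)+c_t(u)$, where $\xi(t)(a)$ is computed coordinatewise in $(\L'\times\U)^n$ from the same pairs $(\ell_i,u_i)$.

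\emph{Step 1: fibres are cosets of $D(u)$.} If $\xi(t)=\xi(t')$, then $t'=t+e$ with $e\in D(\L\otimes\U)^{(k)}$ by Lemma~\ref{lem:diff}\eqref{it:addon}. Conversely, adding any $e\in D^{(k)}$ to $t$ gives a term operation. Hence
\[ R=\{\xi(t)(a)+c_t(u)+e(u)\st t \text{ a term},\ e\in D^{(k)}\}. \]
Define $R':=\langle a_1,\dots,a_k\rangle_{(\L'\times\U)^n}$. For $r'=\xi(t)(a)\in R'$, the set of $x\in R$ lying over $r'$ (equal to $r'$ in the $U$-coordinates, and in $L$ differing by $c_t(u)$ up to $D$) is exactly $\xi(t)(a)+c_t(u)+D(u)$. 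Here $D(u):=D(\L\otimes\U)(u_1,\dots,u_k)$ is a subgroup of $(L,+)^n$ by Lemma~\ref{lem:diff}\eqref{it:clonoid}. This coset does not depend on which term $t$ with $\xi(t)(a)=r'$ is chosen. Indeed, if $\xi(t)(a)=\xi(t')(a)$, then $d(x_1+\ldots)$-type compositions show that $t(a)-t'(a)$ lies in $D(u)$. I would verify this by applying Equation~\eqref{eq:dfe} to the term $d(t,t',\cdot)$, and I expect it to be the one delicate algebraic point.

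\emph{Step 2: algorithm.} The algebra $\L'\times\U$ is supernilpotent, being a product of an abelian algebra and a supernilpotent one. I would run Mayr's compact-representation algorithm for supernilpotent Mal'cev algebras (\cite{Ma:SMP}) on $R'$, but execute every operation simultaneously in $\A$ on chosen lifts. Since $\xi$ is a clone homomorphism, applying a basic operation (or $d$) to lifts yields a lift of the result. This produces a polynomial-size representation $S'$ of $R'$ together with lifts $\lambda(s)\in R$ for $s\in S'$. Next I call the oracle for $\mathrm{CompRep}(D(\L\otimes\U))$ on $u_1,\dots,u_k$ to get a compact representation of $D(u)$; it is a subgroup of $L^n$, so its generators can be closed under $+$ in polynomial time. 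To decide $b\in R$, first decide $b'=(b_L-?,b_U)$-membership. Concretely, I would use Mayr's algorithm to find some $r'\in R'$ with the right $U$-part together with its lift $x\in R$, expressed via a polynomial-size Mal'cev-circuit over $S'$ evaluated in parallel on the lifts. The answer is then $b\in R$ iff there is such an $x$ with $b-x\in D(u)$. Since varying $r'$ within the $U$-fibre of $R'$ changes only $L$-parts by elements of a subgroup computed from $S'$, this reduces to one linear membership test in the abelian group $L^n$ against the sum of two subgroups, which is polynomial.

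\emph{Main obstacle.} The hard step is Step 2's lifting. Mayr's algorithm for $R'$ must be made to carry lifts consistently, so that the representation of $R$ is the lifts of $S'$ plus $D(u)$. This needs Step 1's independence claim, and it needs the lifted Mal'cev closure to stay polynomial. I would first try to prove that $\{\lambda(s)\st s\in S'\}\cup(\lambda(s_0)+D(u))$ has the same signature as $R$, and then invoke \cite{BD:ASA} directly. The $L$-coordinates contribute signature triples only through the $D(u)$-cosets, so this seems the cleanest route.
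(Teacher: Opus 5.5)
Your argument has a genuine gap, exactly at the point you flagged as delicate. (The paper gives no proof of this statement; it only cites \cite{Kom:SMP}.) The independence claim of Step~1 is false. Both the signature claim and the final membership test in Step~2 depend on it.

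From $\xi(t)(a)=\xi(t')(a)$ you only get agreement at the single input $a$. The term $s=d(t,t',x_1)$ then satisfies $s^{\A}(a)=a_1+(c_t(u)-c_{t'}(u))$. However, Lemma~\ref{lem:diff} (via \eqref{eq:dfe}) lets you write $s=x_1+e$ with $e\in D(\L\otimes\U)$ only when $\xi(s)=\xi(x_1)$ as functions.

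Here is a concrete counterexample.
\begin{itemize}
\item Take $L=\Z_3$ and $U=\Z_2$, both with $m(x,y,z)=x-y+z$. Put $g^{\L}(\ell)=-\ell$ and $g^{\U}(u)=u$.
\item Let $\A=\L\otimes\U$ with $\hat m=0$, $\hat g(0)=0$ and $\hat g(1)=1$, so $g^{\A}(\ell,u)=(-\ell+\hat g(u),u)$. Here $\L'=\L$ and $\U$ is abelian.
\item Both $\{(0,0)\}$ and $\{(2,1)\}$ are subuniverses of $\A$. Hence every unary $e\in D(\L\otimes\U)$ is $0$.
\item Take $n=k=1$ and $a_1=(0,1)$. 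Then $D(u)=\{0\}$ and $R'=\{(0,1)\}$. On the other hand $g^{\A}(a_1)=(1,1)$, so $R=\Z_3\times\{1\}$.
\end{itemize}
Thus $\xi(g)(a_1)=\xi(x_1)(a_1)$, yet $g^{\A}(a_1)-a_1\notin D(u)$. Your set $\{\lambda(s)\st s\in S'\}\cup(\lambda(s_0)+D(u))$ is a single element with the wrong signature. Your test accepts only one of the three elements of $R$. The choice of $0\in L$ is not the issue: with zero $z$ the distortion of $g$ becomes $\hat g(u)+z$, which cannot vanish at both $u=0$ and $u=1$.

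What is missing is that $\xi(t)$ must be recorded as a function, not through its value at $a$. This is where supernilpotence has to do real work; in your proposal it only enters through an algorithm for $R'$.

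Since $\L'\times\U$ is supernilpotent, its $k$-ary term functions are determined by their restrictions to a polynomial-size set $E\subseteq(L\times U)^k$. One can take $E$ to be the $k$-tuples with at most $d$ entries different from a fixed element, where $d$ is the degree of supernilpotence; this is the fact underlying Mayr's algorithm.

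Now consider
\[
\Psi:=\{(\xi(t)|_E,\,t^{\A}(a))\st t\in\Clo^k(\A)\},
\]
the subalgebra of $(\L'\times\U)^E\times\A^n$ generated by the $k$ extended generators. For $\Psi$ your fibre claim does hold:
\begin{itemize}
\item $\xi(t)|_E=\xi(t')|_E$ forces $\xi(t)=\xi(t')$;
\item hence $t'=t+e$ with $e\in D(\L\otimes\U)$, by Lemma~\ref{lem:diff}\eqref{it:addon};
\item so the fibres over the $E$-part are exactly cosets of $D(u)$.
\end{itemize}
Your device of carrying lifts through a Mayr-type computation can then be applied to $\Psi$ instead of $R'$. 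Together with the oracle's compact representation of $D(u)$, this assembles a compact representation of $\Psi$. Membership of $b$ in the projection $R$ of $\Psi$ is then decided by fixing the last $n$ coordinates to $b$ with the Bulatov--Dalmau machinery.

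In the example, $\xi(g)(1,0)=(2,0)\neq(1,0)=\xi(x_1)(1,0)$. This is precisely the information that passing through $R'$ discards.
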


So in order to efficiently solve $\SMP(\L \otimes \U)$ it suffices to find a generating set for 
  $D(\L \otimes \U)(a_1, \ldots, a_k) \le \L^n$ of size polynomial in $n$ and $k$. 
To do so for our desired setting we use the interpolation arguments 
  for clonoids between abelian Mal'cev algebras that were used for the proof of \ref{thm:distributive}.
We first develop the necessary technical lemmas. 

\begin{lemma}  \label{lem:dps}
 Let $\ell \in \N$ and let $\A_1, \ldots, \A_\ell$ be one-dimensional vector spaces over finite fields $\F_1, \ldots, \F_\ell$, respectively.
 Let $\A = \A_1 \times \cdots \times \A_\ell$ be a module over $\F:= \F_1 \times \cdots \times \F_\ell$. 
 Let $\B$ be a finite $\S$-module such that $|A|$ and $|B|$ are coprime.
 Let $k\in\N$ and $N\leq\A^k$ isomorphic to $\A$.
 Then there exists $s\colon \{r\in F^{k\times k} \st \rk(r) \leq 1 \} \to S$ such that for every $f\colon A^{k}\to B$
  with $f(0,\dots,0)=0$, the function 
  \[ f_N(x) := \sum_{r\in R_\A^{k\times k}, \rk(r)\leq 1} s(r) f(rx) \]
  satisfies 
  \[f_N(x) = \begin{cases} f(x) & \text{if } x \in N, \\ 0 & \text{else.} \end{cases} \]. 
\end{lemma}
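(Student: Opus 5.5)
The idea is to build $f_N$ entirely from rank-one matrices of the form $r = v u^{T}$, where $v$ generates $N$. Coefficients then come from averaging over affine hyperplanes of the $u$'s, which is possible because $|A|$ is invertible in $\S$.

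First I fix coordinates. Write $\A = \F$ as a module over itself, $\F = \F_1\times\dots\times\F_\ell$, and let $e_1,\dots,e_\ell$ be its primitive idempotents. Then $A^k = \prod_j F_j^k$. Since $N\le\A^k$ is isomorphic to $\A$, it is cyclic, $N = \F v$ for some $v\in F^k$, and the $j$-th component $v_j\in F_j^k$ is nonzero for every $j$. For $u\in F^k$ the matrix $v u^{T}\in F^{k\times k}$ has rank $\le 1$ in every component, and $(v u^{T})x = v\,(u\cdot x)$. I will put $s(r)=0$ for every $r$ not of this form, and choose weights $c_u\in S$ so that $s(vu^T)=\sum_{u'\,:\,vu'^T=vu^T}c_{u'}$. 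Then
\[ f_N(x) = \sum_{u\in F^k} c_u\, f\bigl(v\,(u\cdot x)\bigr). \]
The whole question becomes which signed measure $c$ on $F^k = \prod_j F_j^k$ to use.

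Next I analyse one field component. Put $T_j = \{u_j\in F_j^k \st u_j\cdot v_j = 1\}$ and $T'_j = \{u_j \st u_j\cdot v_j = 0\}$; both are nonempty because $v_j\neq 0$. Consider the signed measure $m_j = |T_j|^{-1}1_{T_j} - |T'_j|^{-1}1_{T'_j}$. All these sizes are powers of $|F_j|$, hence invertible in $S$.

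1. If $x_j = \lambda_j v_j$, the pushforward of $m_j$ under $u_j\mapsto u_j\cdot x_j$ is $\delta_{\lambda_j}-\delta_0$.
2. If $x_j\notin F_j v_j$, then $u_j\cdot x_j$ is uniformly distributed on $F_j$ over both affine hyperplanes $T_j$ and $T'_j$, so the pushforward is $0$.

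Taking the product measure $c = \bigotimes_j m_j$ and using that the pushforward of a product measure is the product of the pushforwards, $f_N(x)=0$ whenever some component of $x$ lies outside $F_jv_j$, that is, whenever $x\notin N$. This is the main point.

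The remaining obstacle is the case $x = \lambda v\in N$. There the product measure yields $\sum_{S\subseteq[\ell]}(-1)^{\ell-|S|} f(e_S\lambda v)$, where $e_S=\sum_{j\in S}e_j$, rather than $f(\lambda v)$. To fix this I will run the same construction for every subset $S\subseteq[\ell]$. On components $j\in S$ I use $m_j$. On components $j\notin S$ I use the point mass at $u_j=0$; the resulting matrices $vu^T$ have zero $j$-components and still have rank $\le1$. Call the resulting function $g_S$.

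On $x\notin N$ we have $g_S(x)=0$ whenever the bad component lies in $S$. Otherwise it equals a corresponding sum with $x$ projected by $e_S$. By the same computation, on $N$,
\[ g_S(\lambda v)=\sum_{S'\subseteq S}(-1)^{|S|-|S'|}f(e_{S'}\lambda v). \]
Möbius inversion on the Boolean lattice then suggests $f_N := \sum_{S}g_S$. I still have to check that this sum also vanishes off $N$. I expect this to work via $f(0)=0$, together with the observation that if $x\notin N$ has bad components $J$, then only the $S$ with $S\cap J=\emptyset$ contribute, and these terms equal the corresponding sums for $e_{[\ell]\setminus J}x$.

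If this bookkeeping fails, the fallback is to replace the point mass at $0$ by weights that recenter the good components so that the inversion telescopes. This Möbius or inclusion–exclusion step, making the cancellations off $N$ and the recovery of $f$ on $N$ hold simultaneously, is the step I expect to be hardest. The hyperplane-averaging part is routine.
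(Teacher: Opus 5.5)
Your product-measure analysis is correct, but the final step fails. The function $f_N:=\sum_{T\subseteq[\ell]} g_T$ does not vanish off $N$; here I write $T$ for your subsets to avoid a clash with the ring $S$, and $x=(x_1,\dots,x_\ell)$ with $x_j\in F_j^k$. The bookkeeping you anticipate shows this directly. Let $x\notin N$, let $J\neq\emptyset$ be the set of components $j$ with $x_j\notin F_jv_j$, and put $K=[\ell]\setminus J$. Only the $T\subseteq K$ contribute, and for these $g_T(x)=\sum_{T'\subseteq T}(-1)^{|T|-|T'|}f(e_{T'}x)$. Hence
\[ \sum_{T\subseteq K} g_T(x)=\sum_{T'\subseteq K} f(e_{T'}x)\sum_{T'\subseteq T\subseteq K}(-1)^{|T|-|T'|} = f(e_Kx). \]
The point $e_Kx$ lies in $N$ and is in general nonzero, so this value is not $0$. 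For example, with $\ell=2$ and $x=(\lambda_1v_1,x_2)$, $\lambda_1\neq 0$, $x_2\notin F_2v_2$, you get $f_N(x)=f(\lambda_1v_1,0)$. The hypothesis $f(0)=0$ only removes the case $K=\emptyset$.

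No reweighting, including your fallback, can repair this. With only $f(0,\dots,0)=0$ assumed, the statement fails for $\ell\ge 2$ and $k\ge 2$ whenever $B\neq 0$. Every $r\in F^{k\times k}$ acts componentwise, $(rx)_j=r_jx_j$, whatever its rank. Take $f(x):=\varphi(x_1)$, depending only on the first field component, with $\varphi=b\cdot 1_{\{v_1\}}$ and $0\neq b\in B$. Then $f(0)=0$, and each $f(rx)=\varphi(r_1x_1)$ depends only on $x_1$, so every $\sum_r s(r)f(rx)$ does too. But $f\cdot 1_N$ equals $b$ at $e_1v=(v_1,0,\dots,0)\in N$. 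It equals $0$ at $(v_1,w,0,\dots,0)\notin N$ for any $w\in F_2^k\setminus F_2v_2$, even though both points have the same first component.

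What your argument does establish is twofold. First, it proves the case $\ell=1$ completely: the two-hyperplane measure with $r=vu^{T}$ is a clean self-contained proof. Second, for general $\ell$, the single product measure $\bigotimes_j m_j$ gives $f_N=f\cdot 1_N$ for every $f$ satisfying the stronger hypothesis $f(e_Tx)=0$ for all $T\subsetneq[\ell]$ and $x\in A^k$. The paper's proof consists only of the citation \cite[Corollary 3.5]{MaWy:CBM}. So the defect lies in the hypothesis of the lemma as formulated here, not in your technique, and it would also have to be dealt with where the lemma is applied in Lemma~\ref{lem:malcev}.
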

\begin{proof} 
  Follows from \cite[Corollary 3.5]{MaWy:CBM}.
\end{proof}

For $i \in \{1, \ldots, \ell \}$ suppose $\A_i$ is a finite simple abelian Mal'cev algebra. 
Denote by ${\A_i}_0$ the algebra obtained by expanding $\A_i$ by constant $0 \in A_i$.
Then ${\A_i}_0$ is a simple $\R_{\A_i}$-module. 
By the Wedderburn-Artin Theorem $\R_{\A_i} \cong \K_i^{n_i\times n_i}$ 
  and ${\A_i}_0 \cong \K_i^n$ for some finite field $\K_i$ and $n_i \in\N$.
Note that that the matrix ring $\K_i^{n_i\times n_i}$ (and hence $\R_{\A_i}$) 
  has a subring $\F_i$, which is a field of order $|K_i|^{n_i}$, and ${\A_i}_0$ has 
  a reduct $\hat{\A}_i$, which is a one-dimensional vector space over $\F_i$.

\begin{lemma} \label{lem:malcev}
 Let $\A = \A_1 \times \cdots \times \A_\ell$ be a product of finite simple abelian Mal'cev algebras 
  that are pairwise non-isomorphic.
 Let $\B$ be a finite $\S$-module such that $|A|$ and $|B|$ are coprime.
 
 Let $\F_i \leq\R_{\A_i}$ be a field of order $|F_i|=|A_i|$ for $1 \le i \le \ell$.
 Let $0_i \in A_i$ and $\hat{\A}_i$ be a polynomial reduct of $\A_i$ that is
  a $1$-dimensional vector space over $\F_i$ with zero $0_i$.
 Let $\F := \F_1 \times \cdots \times \F_\ell$ 
  and let $\hat{\A} := \hat{\A}_1 \times \dots \times \hat{\A}_\ell$ be an $\F$-module 
  with zero $0:= (0_1, \ldots, 0_\ell)$. 
 Let $k\in\N$, $D := \{ (z,\dots,z) \in A^{k+1} \st z\in A \}$, and $D\leq N\leq\hat{\A}^{k+1}$ 
  such that $N\cong\hat{\A}^2$.
 
  Then there exists $s\colon \{r\in F^{k\times k} \st \rk(r) \leq 1 \} \to \S$ such that for all
    $f\colon A^{k+1}\to B$ satisfying $f(z,\dots,z)=0$ for all $z\in A$
    the function
    \[ f_N(x,z) := \sum_{r\in F^{k\times k}, \rk(r)\leq 1} s(r) f(r*_zx,z) \text{ for } x\in A^k, z\in A, \]
    satisfies
    \[ f_N(x,z) = 
      \begin{cases} 
        f(x,z) & \text{if } (x,z)\in N, \\
        0 & \text{else}. 
      \end{cases} 
    \]
\end{lemma}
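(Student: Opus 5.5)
The plan is to reduce Lemma~\ref{lem:malcev} to Lemma~\ref{lem:dps} by moving the origin of $\hat{\A}$ to the diagonal point $z$. I read $r *_z x$ as the action of $r$ on $x$ with $z$ as origin, namely $r *_z x := r(x - z\cdot\mathbf 1) + z\cdot\mathbf 1$. Here the subtraction and the action of $r\in F^{k\times k}$ are computed in the $\F$-module $\hat{\A}^k$, and $z\cdot\mathbf 1=(z,\dots,z)$. The crucial point is that the coefficient function $s$ produced by Lemma~\ref{lem:dps} depends only on a fixed submodule $N_0\le\hat{\A}^k$. It does not depend on $z$ or on $f$, so a single $s$ will serve all $z$ at once.

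\emph{Step 1: split $N$ along the diagonal.} Define $N_0 := \{y\in A^k \st (y,0)\in N\}$, which is a submodule of $\hat{\A}^k$. Since $D\le N$ and $N$ is a submodule, for $(x,z)\in A^k\times A$ we have
\[(x,z)\in N \iff (x,z)-(z,\dots,z) = (x-z\cdot\mathbf 1,0)\in N \iff x-z\cdot\mathbf 1\in N_0 .\]
The projection of $N$ onto the last coordinate is onto $\hat{\A}$, because it is already onto on $D$, and its kernel is $N_0\times\{0\}$. Hence $N\cong N_0\oplus\hat{\A}$ as $\F$-modules. Since $\F=\F_1\times\dots\times\F_\ell$ is semisimple and $N\cong\hat{\A}^2$, comparing the $\F_i$-dimensions componentwise gives $N_0\cong\hat{\A}$.

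\emph{Step 2: translate $f$ and apply Lemma~\ref{lem:dps}.} Apply Lemma~\ref{lem:dps} to the $\F$-module $\hat{\A}$, the submodule $N_0\le\hat{\A}^k$, and $\B$; this is allowed since $|A|$ and $|B|$ are coprime. This yields $s\colon\{r\in F^{k\times k}\st \rk(r)\le 1\}\to S$. For fixed $z\in A$ and a given $f$, define $g_z\colon A^k\to B$ by $g_z(y):=f(y+z\cdot\mathbf 1,z)$. Then $g_z(0)=f(z,\dots,z)=0$, so Lemma~\ref{lem:dps} gives
\[\sum_{\rk(r)\le1}s(r)\,g_z(ry)=\begin{cases}g_z(y)& y\in N_0,\\0&\text{else.}\end{cases}\]
Now substitute $y=x-z\cdot\mathbf 1$. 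The left side becomes $\sum s(r) f(r(x-z\cdot\mathbf 1)+z\cdot\mathbf 1,z)=f_N(x,z)$. The right side equals $f(x,z)$ if $x-z\cdot\mathbf 1\in N_0$, which by Step~1 means $(x,z)\in N$, and equals $0$ otherwise. This is exactly the claim.

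\emph{Main obstacle.} The delicate point is matching the notation $r*_z x$ with the translated $\F$-action. Each $\F_i$ acts on $\hat{\A}_i$ through the polynomial reduct with zero $0_i$, and we must check that re-centering at $z$ is compatible with this action, i.e.\ that $y\mapsto r(y-z)+z$ is the intended operation. If the paper instead defines $*_z$ via the ring $\R_{\A_i}$ of $\A_i$ expanded by the constant $z_i$, I would verify that this agrees with the translated action: all the isomorphic modules $(\A_i)_{z_i}$ are identified through the translation $y\mapsto y-z_i$, which is a polynomial of $\A_i$. The hypothesis that the $\A_i$ are pairwise non-isomorphic is what makes $\F$ a product of distinct fields acting componentwise. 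This in turn makes the dimension count in Step~1 (that $N_0\cong\hat{\A}$) and the appeal to Lemma~\ref{lem:dps}, with $\A$ a product of one-dimensional spaces, legitimate.
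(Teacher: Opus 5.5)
Your proof is correct and is essentially the paper's argument: apply Lemma~\ref{lem:dps} once to the slice $N_0=\{y \st (y,0)\in N\}\cong\hat{\A}$ to get an $s$ independent of $f$ and $z$, then transport the result to each fiber $A^k\times\{z\}$ by the translation $p_z$ (your $g_z$ is exactly the paper's $fp_z$ restricted to $A^k\times\{0\}$). Your Step~1 justifies $(x,z)\in N \iff x-z\cdot\mathbf 1\in N_0$ more explicitly than the paper's decomposition of $N$ into fibers $N_z=F*_z p_z(a)$. One small correction to your closing remark: neither the componentwise $\F$-action nor the dimension count uses that the $\A_i$ are pairwise non-isomorphic, and the lemma as stated never needs that hypothesis; it matters only when one wants $(x,z)\mapsto r*_z x$ to be induced by term operations of $\A$, so that $f_N$ is an $(\A,\B)$-minor as in Lemma~\ref{lem:fdecomp}.
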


\begin{proof}
  Note that $\hat{\A}$ and $\B$ satisfy the assumptions of Corollary~\ref{lem:dps}.
  Since $\pi_i(N)$ has dimension $2$ over $\F_i$ for all $1 \le i \le \ell$, 
    we have $N = F*_0 a+_0 D$ for some $a\in A^k\times \{0\}$. 
  In particular
    \[ N = \{r(a,0) \st r\in F \} +_0 D = \{r(a,d) \st r\in F, d\in D \} = \bigcup_{z\in A} F*_z a. \]
  For $z\in A$, the $\F$-module isomorphism $p_z\colon A \to A_z,\ x\mapsto x+_0z$, 
    naturally induces an isomorphism 
    $\hat{\A}^k \times \{0\} \to \hat{\A}_z^k \times \{z\}$ 
    that maps the submodule $N_0 := F *_0 a$ to
    $N_z := F *_z p_z(a)$. Further $N$ is the disjoint union of all $N_z$ for $z\in A$.

  By Corollary~\ref{lem:dps} there exists $s\colon \{r\in F^{k\times k} \st \rk(r)\leq 1 \} \to S$ 
    such that for all $f\colon A^{k+1}\to B$ with $f(0,\dots,0)=0$, 
    \begin{equation} \label{eq:f0N0}
      f_{N_0}(x,0) := \sum_{r\in F^{k\times k}, \rk(r)\leq 1} s(r) f(r*_0x,0)
    \end{equation}
    satisfies
    \[ f_{N_0}(x,0) = 
      \begin{cases} 
        f(x,0) & \text{if } (x,0)\in N_0, \\
        0 & \text{else if } (x,0) \in A^k\times\{0\}. 
      \end{cases} 
    \]

  Let $f\colon A^{k+1}\to B$ such that $f(z,\dots,z)=0$ for all $z\in A$.
  Let $z\in A$. 
  Abusing notation we write
    \[ fp_z \colon A^{k+1}\to A,\ (x_1,\dots,x_{k+1}) \mapsto f(p_z(x_1),\dots,p_z(x_{k+1})). \]
  Since $fp_z(0,\dots,0) = f(z,\dots,z) = 0$, we may apply~\eqref{eq:f0N0} to $fp_z$ instead of $f$.
  Using that $p_z$ is a homomorphism, we obtain for all $x\in A^k$ that
    \begin{align*}
      (fp_z)_{N_0}(x,0) &= \sum_{r\in F^{k\times k}, \rk(r)\leq 1} s(r) f(p_z(r*_0x),p_z(0)) \\
      &= \sum_{r\in F^{k\times k}, \rk(r)\leq 1} s(r) f(r*_zp_z(x),z).
    \end{align*}
  Since $p_z$ induces a bijection on $A^{k+1}$ that maps $N_0$ to $N_z$, this yields that
    \[ f_{N_z}(x,z) := \sum_{r\in F^{k\times k}, \rk(r)\leq 1} s(r) f(r*_zx,z) \]
    satisfies
    \[ f_{N_z}(x,z) = 
      \begin{cases} 
        f(x,z) & \text{if } (x,z)\in N_z, \\
        0 & \text{else if } (x,z)\in A^k\times\{z\} . 
      \end{cases} 
    \]
  Since $N$ is the union of $N_z$ for all $z\in A$,
    \[ f_N = \bigcup_{z\in A} f_{N_z} \]
    has the required properties. 
\end{proof}  

From the previous lemma we obtain:

\begin{lemma} \label{lem:fdecomp}
  Let $\ell \in \N$ and let $\A = \A_1 \times \cdots \times \A_\ell$ be a product of 
    finite simple abelian Mal'cev algebras that are pairwise non-isomorphic.
  Let $\B$ be a finite module such that the orders of $\A$ and $\B$ are coprime.

  Let $\F_i \leq\R_{\A_i}$ be a field of order $|F_i|=|A_i|$ for $1 \le i \le \ell$.
  Let $0_i \in A_i$ and $\hat{\A}_i$ be a polynomial reduct of $\A_i$ that is
    a $1$-dimensional vector space over $\F_i$ with zero $0_i$.
  Let $\F := \F_1 \times \cdots \times \F_\ell$ 
    and let $\hat{\A} := \hat{\A}_1 \times \dots \times \hat{\A}_\ell$ be an $\F$-module 
    with zero $0:= (0_1, \ldots, 0_\ell)$. 
  Let $k\in\N$, $0\in A$, $D := \{ (z,\dots,z) \in A^{k+1} \st z\in A \}$
    and
    \[ V := \{ N\leq \hat{\A}^{k+1} \st D \leq N,\, N \cong \hat{\A}^2 \}. \]
  Then for each $f\colon A^{k+1}\to B$ the functions
    \begin{align*}
      f'(x,z) := f(x,z)-f(z,\dots,z,z)  \\
      f'_N(x,z) := 
        \begin{cases} 
          f'(x,z) & \text{if } (x,z) \in N \\
          0 & \text{else}, 
        \end{cases}
    \end{align*}
    are $(\A,\B)$-minors of $f$ and
    \[ f(x,z) = f(z,\dots,z,z) + \sum_{N\in V} f'_N(x,z). \]         
\end{lemma}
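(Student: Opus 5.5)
The proof has two parts. First we check that $f'$ and every $f'_N$ are $(\A,\B)$-minors of $f$. Then we show that the sets $N\setminus D$ for $N\in V$ partition $A^{k+1}\setminus D$, which yields the decomposition.

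For the minor property, $f'(x,z)=f(x,z)-f(z,\dots,z)$ is obtained from $f$ by substituting the projections $x_1,\dots,x_k,z$ into $f$, and, separately, the projection $z$ into every argument. Taking the difference of these two minors in $\B$ is allowed, since $\Clo(\B)$ contains $+$ and $-$ as $\B$ is a module. Moreover $f'(z,\dots,z)=0$ for all $z\in A$. By Lemma~\ref{lem:malcev} applied to $f'$, for each $N\in V$ we have
\[ f'_N(x,z)=\sum_{r} s(r)\,f'(r*_z x,z). \]
Each map $(x,z)\mapsto r*_z x$ is a tuple of term functions of $\A$: $\hat{\A}_i$ is a polynomial reduct of $\A_i$ with the constant $0_i$, and $r*_z$ is the $\F$-module action with zero moved to $z$, so it is idempotent and hence a term operation of the abelian Mal'cev algebra $\A$. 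The scalars $s(r)\in S$ act as term operations of $\B$ on the outside. Hence $f'_N\in\langle f\rangle_{\A,\B}$.

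For the decomposition, fix $(x,z)\in A^{k+1}$ and treat the points of $D$ and of its complement separately. If $(x,z)\in D$, then $x=(z,\dots,z)$, so $f'(x,z)=0$ and every $f'_N(x,z)=0$. The identity then reads $f(z,\dots,z)=f(z,\dots,z)$.

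Now suppose $(x,z)\notin D$. The main step is to show that there is exactly one $N\in V$ containing $(x,z)$. We work componentwise in $\hat{\A}=\prod_i\hat{\A}_i$, where $\hat{\A}^{k+1}$ is a free $\F$-module and $D$ is a cyclic submodule isomorphic to $\hat{\A}$. A submodule $N\geq D$ with $N\cong\hat{\A}^2$ is the same thing as a choice, for each $i$, of a two-dimensional $\F_i$-subspace of $\hat{\A}_i^{k+1}$ containing $D_i$. Then $(x,z)$ lies in such an $N$ exactly when, for each $i$, the component $(x,z)_i$ lies in the chosen plane $N_i$.
\begin{itemize}
\item For each $i$ with $(x,z)_i\notin D_i$, the plane $N_i$ is forced to be $D_i+\F_i(x,z)_i$.
\item For each $i$ with $(x,z)_i\in D_i$, any plane $N_i\supseteq D_i$ contains $(x,z)_i$, so $N_i$ is not determined.
\end{itemize}
This second case breaks uniqueness. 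I expect it to be the main obstacle, and the plan is to fix it through the correct reading of $V$ in the paper.

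Since the parametrisation in the proof of Lemma~\ref{lem:malcev} writes $N=\bigcup_z F*_z a$, the intended meaning is presumably "$N\cong\hat{\A}^2$, or $N$ generated by $D$ together with one element". In that reading, components where $(x,z)_i\in D_i$ contribute $N_i=D_i$, and uniqueness holds once $N$ is taken to be the submodule generated by $D\cup\{(x,z)\}$. I would therefore define, for each $(x,z)$, the submodule $N_{(x,z)}:=D+\F(x,z)$ and check two facts:
\begin{itemize}
\item $N_{(x,z)}\in V$, with $V$ read so that it includes these degenerate submodules;
\item any other $N\in V$ containing $(x,z)$ has $f'_N(x,z)=f'(x,z)$ only when $N=N_{(x,z)}$.
\end{itemize}
Once the sets $N\setminus D$ are seen to partition $A^{k+1}\setminus D$, the sum collapses to $f'(x,z)$. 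This gives
\[ f(x,z)=f(z,\dots,z)+f'(x,z)=f(z,\dots,z)+\sum_{N\in V}f'_N(x,z), \]
which is the claimed decomposition.
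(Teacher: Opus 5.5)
\textbf{Where the paper's proof breaks.} Your first half matches the paper's argument: $f'$ is a difference of two minors, it vanishes on $D$, and Lemma~\ref{lem:malcev} then makes every $f'_N$ a minor. For the identity, the paper only asserts that the members of $V$ cover $A^{k+1}$ and that distinct members meet exactly in $D$. Your componentwise analysis correctly shows this is false once $\ell\ge 2$ and $k\ge 2$. Two members of $V$ that differ only in component $j$ meet in a submodule whose $i$-th component is $N_i$ for $i\ne j$ and $D_j$ for $i=j$. This is strictly larger than $D$.

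In fact the displayed identity itself fails. Take $\A=(\Z_2;x-y+z)\times(\Z_3;x-y+z)$, $\B=\Z_5$, $k=2$, and identify $A^3$ with $\Z_2^3\times\Z_3^3$. Let $p\in A^3$ have $\Z_2$-part $(0,0,0)$ and $\Z_3$-part $(1,0,0)$, and let $f$ be the indicator function of $\{p\}$. Then $f(z,z,z)=0$ for all $z$, so $f'=f$. The point $p$ lies in exactly three members of $V$: any of the three planes of $\Z_2^3$ through $(1,1,1)$, times the plane $P_2$ of $\Z_3^3$ spanned by $(1,1,1)$ and $(1,0,0)$. So at $p$ the right-hand side equals $3\neq 1=f(p)$. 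Your diagnosis is right, and the lemma cannot be proved as stated. For $\ell=1$, your argument and the paper's are complete.

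\textbf{Why your repair does not close the gap.} Enlarging $V$ by the degenerate submodules $D+\F y$ does not make the sets $N\setminus D$ disjoint, because these submodules are nested. With $p$ as above, $D+\F p=D_1\times P_2$, where $D_1$ is the diagonal of $\Z_2^3$. This submodule is contained in each of the three members $N_1\times P_2$. So $p$ now lies in four members, and the sum returns $4f'(p)$.

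Your second bullet cannot hold, since by definition $f'_N(x,z)=f'(x,z)$ for every $N$ containing $(x,z)$. Moreover, for degenerate $N$ you would need a new interpolation result, because Lemma~\ref{lem:malcev} only restricts to $N\cong\hat{\A}^2$.

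What is missing is a step that removes the overlaps before restricting. One way:
\begin{itemize}
\item For $J\subseteq[\ell]$ let $e_J\in F$ be the idempotent supported on $J$. Put $\pi_J(x,z):=(e_J*_z x,z)$, which collapses the components outside $J$ to the diagonal through $z$.
\item Let $\Delta_J f:=\sum_{J'\subseteq J}(-1)^{|J\setminus J'|}f\circ\pi_{J'}$. Each $\Delta_J f$ is a minor of $f$, and $f=\sum_{J}\Delta_J f$ with $\Delta_\emptyset f=f(z,\dots,z,z)$.
\item $\Delta_J f$ vanishes at every point having some component $i\in J$ in $D_i$.
\item Hence for $J\ne\emptyset$, $\Delta_J f$ is the sum of its restrictions to the sets where, for all $i\in J$, the $i$-th component lies in a chosen plane $N_i\supseteq D_i$. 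These sets overlap only where $\Delta_J f$ vanishes.
\item These restrictions are minors by running the argument of Lemma~\ref{lem:malcev} one component at a time.
\end{itemize}
This gives a correct decomposition, but with $\Delta_J f$ in place of $f'$. That is a different statement from the one displayed.
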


\begin{proof}
  By its definition $f'$ is an $(\A,\B)$-minor of $f$ and $f'(z,\dots,z,z)=0$ for all $z\in A$.
  Then Lemma~\ref{lem:malcev} applies to $f'$ and yields that $f'_N$ are $(\A,\B)$-minors of $f'$, 
    and hence of $f$.
  Since the submodules $N\in V$ cover $A^{k+1}$ and any distinct $N,M\in V$ intersect exactly in $D$,
    we have $f' = \sum_{N\in V} f'_N$ and the result follows.
\end{proof}

We are now ready to show that we can efficiently compute a generating set 
for the image of a clonoid in this setting.

\begin{lemma} \label{lem:generators}
  Let $\ell \in \N$ and let $\A = \A_1 \times \A_2 \times \cdots \times \A_\ell$ 
    be a product of pairwise non-isomorphic finite simple abelian Mal'cev algebras $\A_1, \ldots, \A_\ell$.
  Let $\B$ be a finite $\S$-module such that the orders
    of $\A$ and $\B$ are coprime. Let $C$ be a clonoid from $\A$ to $\B$.

  Given $k,n\in\N$ and $a_1,\dots,a_{k+1}\in A^n$, we can compute a set of generators for 
    \[ C^{(k+1)}(a_1,\dots,a_{k+1}) := \{ f(a_1,\dots,a_{k+1}) \st f\in C^{(k+1)} \} \leq \B^n \]
    in time polynomial in $n$ and $k$.
\end{lemma}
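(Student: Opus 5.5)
Let $a_1,\dots,a_{k+1}\in A^n$ and let $R := C^{(k+1)}(a_1,\dots,a_{k+1})\le \B^n$. I will write $(a_1(j),\dots,a_{k+1}(j))\in A^{k+1}$ for the $j$-th column of the input. The plan is to decompose every $f\in C^{(k+1)}$ via Lemma~\ref{lem:fdecomp}:
\[ f(x,z) = f(z,\dots,z) + \sum_{N\in V} f'_N(x,z). \]
Each summand is an $(\A,\B)$-minor of $f$ and therefore lies in $C$. So $R$ is contained in the sum of the submodules
\[ R_0 := \{ g(z,\dots,z)(a) \st g\in C^{(k+1)}\} \quad\text{and}\quad R_N := \{ g'_N(a) \st g\in C^{(k+1)}\} \quad (N\in V). \]
Conversely, all of these are subsets of $R$, since $g(z,\dots,z)$ and $g'_N$ belong to $C^{(k+1)}$ and $R$ is a submodule (as $C$ is closed under the module operations of $\B$). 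Hence $R = R_0 + \sum_{N\in V} R_N$.

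The key observation is that only polynomially many $N\in V$ matter. The function $g'_N$ vanishes outside $N$ and also on $D$, because $g'(z,\dots,z)=0$. So $g'_N(a)$ is supported on the coordinates $j$ whose column lies in $N\setminus D$. Every point outside $D$ lies in exactly one $N\in V$, since distinct members of $V$ meet exactly in $D$. Therefore only the at most $n$ submodules $N$ containing some input column outside $D$ can give a nonzero $R_N$. Each such $N$ can be computed as the $\F$-span (with respect to $*_0$, $+_0$) of $D$ and that column, in time polynomial in $k$.

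Next I need generators for each relevant $R_N$ and for $R_0$. Here I will use the fact that the value $g'_N(a)$ depends only on the restriction of $g$ to the finite set $N$. Since $N\cong\hat{\A}^2$, I can fix an $\F$-linear parametrisation $\phi_N\colon A^2\to N$. Precomposing $g$ with the coordinates of $\phi_N$ gives a binary function $h_g := g\circ\phi_N$. This $h_g$ should be in $C^{(2)}$, because the coordinates of $\phi_N$ are polynomial (affine) maps of $\A$ built from $\hat{\A}$. (This is the step where I must check that these coordinates are in fact term operations, or else absorb the constants through the diagonal variable $z$, which is exactly why $N$ contains $D$.) Conversely, every binary $h\in C^{(2)}$ arises this way after composing back with a left inverse of $\phi_N$, which is a $(k+1)\to 2$ projection-like term map. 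It follows that $R_N = \{ h'_{N}(\phi_N^{-1}(a)) \st h\in C^{(2)}\}$, where $\phi_N^{-1}(a)$ denotes the coordinates on the columns lying in $N$, with all other entries $0$.

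Now $C^{(2)}$ is a finite set of functions $A^2\to B$ that depends only on $C$, not on the input, so I can fix it once and for all. Then $R_N$ is generated by the at most $|B|^{|A|^2}$ vectors obtained by evaluating each $h\in C^{(2)}$, each computable in time $O(nk)$. Similarly, $R_0$ is generated by $\{ h(z)(a_{k+1}) \st h\in C^{(1)}\}$, after reparametrising through the last variable. Taking the union over the $\le n$ relevant $N$ together with $R_0$ gives a generating set of size $O(n)$, computed in time polynomial in $n$ and $k$.

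The main obstacle is the reduction from $(k+1)$-ary to binary functions in the previous paragraph. I need to verify that restricting to $N$ and transporting along $\phi_N$ stays inside $C$ and gives all of $R_N$. I would try first to use the explicit form of $f'_N$ from Lemma~\ref{lem:malcev}, namely $f'_N(x,z)=\sum_r s(r)f'(r*_z x,z)$ with rank-one $r$. A rank-one $r$ factors through a single coordinate, so each summand is a binary minor of $f'$, evaluated at linear images of $(a,a_{k+1})$. This directly exhibits $R_N$ as spanned by evaluations of binary members of $C$ at tuples computable in polynomial time.
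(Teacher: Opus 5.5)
You follow the paper's route: decompose with Lemma~\ref{lem:fdecomp}, keep only the members of $V$ that contain an input column outside $D$, parametrize each such $N$ by a binary term map ($q_N$ in the paper, your $\phi_N$), and use that $C^{(2)}$ is a fixed finite set. You are in fact more careful than the paper about why the resulting vectors lie in $R$.

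\textbf{The gap.} The step ``every point outside $D$ lies in exactly one $N\in V$'' is false once $\ell\ge 2$ and $k\ge 2$. Both your identity $R=R_0+\sum_N R_N$ and your bound of $n$ relevant $N$ depend on it. Write $D^i:=\pi_i(D)$. The submodules of $\hat{\A}^{k+1}$ are the products $N^1\times\dots\times N^\ell$ of $\F_i$-subspaces $N^i\le\hat{\A}_i^{k+1}$, and $N\in V$ just says that every $N^i$ is a plane through $D^i$. So a point $p\notin D$ with $\pi_i(p)\in D^i$ for some $i$ lies in every member of $V$ whose $i$-th factor is any plane through $D^i$, and there are exponentially many such members in $k$.

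Concretely, take $\A=\Z_2\times\Z_3\cong\Z_6$ (as groups), $k=2$ and $p=(0,2,0)\in\Z_6^3$. The $\Z_3$-factor of $N$ is forced, but the $\Z_2$-factor can be any of the three planes through $(1,1,1)$ in $\Z_2^3$, so $p$ lies in three members of $V$. For $f\colon A^3\to\Z_5$ the indicator of $\{p\}$ one gets $\sum_{N\in V}f'_N(p)=3\neq f(p)$. The paper's proof relies on the same claim (in the proof of Lemma~\ref{lem:fdecomp} and in choosing ``the unique'' $N_i$), so following it does not close the gap.

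\textbf{A repair.} Split along components first.
For $J\subseteq\{1,\dots,\ell\}$:
\begin{itemize}
\item let $e_J\in\F$ be the corresponding idempotent;
\item let $\epsilon_J(x,z):=(e_J*_z x_1,\dots,e_J*_z x_k,z)$, which is a term map for the same reason $q_N$ is;
\item put $f^{[J]}:=\sum_{K\subseteq J}(-1)^{|J\setminus K|}f\circ\epsilon_K\in C$.
\end{itemize}
Then $f=\sum_J f^{[J]}$ with $f^{[\emptyset]}=f(z,\dots,z)$. For $J\neq\emptyset$ one has $f^{[J]}=f^{[J]}\circ\epsilon_J$, and $f^{[J]}(p)=0$ whenever $\pi_i(p)\in D^i$ for some $i\in J$.

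At every remaining point $p$ the product $N_J:=\prod_{i\in J}(D^i+\F_i\,\pi_i(p))$ is uniquely determined. Hence $f^{[J]}$ is the sum, over all such $N_J$, of the functions that agree with $f^{[J]}$ on points whose $J$-components lie in $N_J$ and vanish elsewhere. Each of these pieces is a minor of $f$: apply Lemma~\ref{lem:malcev} to $f^{[J]}$ with any $N\in V$ whose $J$-components form $N_J$, then precompose with $\epsilon_J$.

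Each input column now contributes at most $2^\ell-1$ pairs $(J,N_J)$. Your parametrization argument then goes through for each pair, because all points $\epsilon_K(p)$ involved lie in $N$. So the piece at $a$ depends only on $f\circ q_N\in C^{(2)}$, and the left inverse of $q_N$ gives membership in $R$.

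\textbf{The rank-one remark.} The expansion in your last paragraph does show that $f'_N$ is a minor. But it has $|F|^{O(k)}$ summands, so it cannot supply the polynomial bound; that has to come from the parametrization.
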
  
        
\begin{proof}
  We adopt all notation as in Lemma~\ref{lem:malcev} and the comments immediately preceding.

  Let $D := \{ (z,\dots,z)\in A^{k+1} \st z\in A\}$ and
    \[V := \{ N\leq \hat{\A}^{k+1} \st D \leq N, \,  N \cong \hat{\A}^2 \}.\]
  By Lemma~\ref{lem:fdecomp} every $f\in C^{(k+1)}$ is a sum of $f(z,\dots,z,z) \in C^{(1)}$ and a sum of
    functions $f'_N \in C^{(k+1)}$ with support in $N\setminus D$ for some $N\in V$ for 
    \[f'(x_1, \dots, x_k,z) = f(x_1, \dots, x_k, z) - f(z,\ldots,z,z).\]

  Let $N\in V$. Our first goal is to parametrize $N$ by elements in $A^2$ via term functions of $\A$.
  From the first paragraph of the proof of Lemma~\ref{lem:malcev}, recall that we have $N = F*_0 a+_0 D$
    for some $a\in A^k\times \{0\}$.
  Let $m$ be the Mal'cev term of $\A$.
  For $z\in A$, the $\F$-module isomorphism $p_z\colon A_0\to A_z,\ x\mapsto m(x,0,z)$, 
    naturally induces an isomorphism $\hat{\A}_0^k \times \{0\} \to \hat{\A}_z^k \times \{z\}$ 
    that maps the submodule $N_0 := F *_0 a$ to $N_z := F *_z p_z(a)$.

  For $1 \le i \le \ell$, let $\pi_i$ denote the $i^{th}$ projection of $A^{k+1}$ onto $A_i^{k+1}$, 
    that is, 
    \[ \pi_i \colon A_1^{k+1} \times \dots \times A_\ell^{k+1} \rightarrow A_i^{k+1}, \quad (x_1, \dots x_\ell) \mapsto x_i.\]
  Note that $N_0 := F *_0 a$ is such that $\pi_i(N_0)$ has dimension $1$ over $\F_i$ for each $1 \le i \le \ell$.
  So we have $r_1^i,\dots,r_k^i\in F_i$ such that for each $1 \le i \le \ell$,
    \[\pi_i( N_0 )= \{(r_1^i(x,0),\dots,r_k^i(x,0),0) \st x\in A_i \}. \]
  So for $r_j := (r^1_j, \ldots, r^\ell_j) \in F$ for $1 \le j \le k$ we have 
    \[N_0 = \{(r_1(x,0), \ldots, r_k(x,0)) \st x \in A \}. \]
  Since $p_z$ is an $\F$-module isomorphism, we see that
    \begin{align*}
      N_z & = p_z(N_0) \\
      & = \{(r_1(p_z(x),z),\dots,r_k(p_z(x),z),z) \st x\in A \} \\
      & = \{(r_1(x,z),\dots,r_k(x,z),z) \st x\in A \}.
    \end{align*}
  Hence
    \[ N = \bigcup_{z\in A} N_z = \{(r_1(x,z),\dots,r_k(x,z),z) \st x,z \in A \}. \] 
  Define
    \[ q_N\colon A^2\to N,\ (x,z)\mapsto (r_1(x,z),\dots,r_k(x,z),z). \]
  Then $q_N$ is a bijection induced by term functions of $\A$. 
  Consequently,
    \[ g_N := f'q_N \in C^{(2)} \]
    and $g_N(z,z) = 0$ for all $z\in A$. Clearly $g_N q_N^{-1} = f'_N|_N$.

  Let $I := \{ i\in \{1,\dots,n\} \st (a_{1i},\dots,a_{k+1i}) \not\in D \}$.
  For each $i\in I$ let $N_i$ be the unique element in $V$ such that $(a_{1i},\dots,a_{k+1i})\in N_i$.
  For each $g\in C^{(2)}$ with $g(z,z) = 0$ for all $z \in A$ and for each $i\in I$ define $b_{g,i}\in B^n$ by
    \[ (b_{g,i})_j := \begin{cases} gq_{N_i}^{-1}(a_{1j},\dots,a_{k+1j}) & \text{if } j\in I, N_j = N_i \\ 0 & \text{else.} \end{cases} \]
  Then $C^{k+1}(a_1,\dots,a_{k+1}) \le \B^n$ is generated by
    \[ G:= C^{(1)}(a_{k+1}) \cup \{ b_{g,i} \st g\in C^{(2)}, g(z,z) = 0 \text{ for all } z, \, i \in I \}. \]
  To see this, suppose that $f \in C^{k+1}$. 
  Then 
    \[f(x_1, \dots, x_k,z)  = f(z,\dots,z) + \sum_{N \in V} f'_{N}(x_1,\dots, x_k,z)\]
    and $f(z,\dots,z,z) \in C^{(1)}$. 
  Moreover, for each $N \in V$ we have that $g_N q_N^{-1} = f'_N|_N$
    and so 
    \[ \sum_{N \in V} f'_{N}(a_1, \dots, a_{k+1}) = \sum_{i \in I} f'_{N_i}(a_1, \dots, a_{k+1}) = \sum_{i \in I} g_{N_i}q_{N_i}^{-1}(a_1, \dots, a_{k+1}), \]
    which is generated by  $\{ b_{g,i} \st g\in C^{(2)}, g(z,z) = 0 \text{ for all } z, \, i \in I \}$. 
  Hence $G$ generates $f(a_1, \dots, a_{k+1}) \in \B^n$ for each $f \in C^{k+1}$.

  For $N \in V$, $N$ is the submodule of $\A^{k+1}$ generated by the diagonal $\D$ and some 
    element $a \notin D$.
  We have $q_N$ given by $q_N(x,z) = (r_1(x,z), \dots, r_k(x,z),z)$ for $r_1, \dots r_k \in \F$,
    and $r_i = (r_{i}^1, \dots, r_{i}^{\ell})$ with $r_{i}^j \in \F_i$ for each $ 1 \le j \le \ell$. 
  Since each $\F_i$ is a field we can compute $r_i^1, \dots, r_i^\ell$ in time polynomial in $k$ 
    for each $1 \le i \le \ell$. 
  Therefore we can compute $q_N^{-1}$ in time polynomial in $k$ for each $N$. 
  Now, to compute $G$ we need only to compute $gq_{N_i}^{-1}(a_{1,i}, \dots, a_{{k+1},i})$ for each $i \in I$, and $|I| \le n$.
  Hence we need to compute at most $n$ many $q_N$ for $N \in V$.
  Moreover the number of $g \in C^{(2)}$ satisfying $g(z,z) = 0$ is bounded by $|B|^{|A|^2}$. 
  In particular it is independent of the input size. 
  Hence we can compute $G$ in time polynomial in $n$ and $k$. 
\end{proof}

We are now ready to prove the main theorem of this section. 

\smpthm*

\begin{proof}
  By Lemma~\ref{lem:generators}, since $\U$ is a product of pairwise non-isomorphic 
    simple abelian Mal'cev algebras, we can compute a polynomial size generating set $G$ 
    for $D(\L \otimes \U)(a_1, \ldots, a_{k+1}) \le \L^n$ on inputs $a_1, \ldots, a_{k+1} \in U^n$ 
    in time polynomial in $n$ and $k$.
  If this generating set is not a compact representation of $D(\L \otimes \U)(a_1, \ldots, a_{k+1})$, 
    that is,  if $|G| > 2|\text{Sig}(D(\L \otimes \U)(a_1, \ldots, a_{k+1}))|$,
    we can then reduce this generating set in polynomial time to obtain a compact representation of 
    $D(\L \otimes \U)(a_1, \ldots, a_{k+1}) \le \L^n$.
  Since $\U$ is abelian (and hence supernilpotent), $\SMP(\A)$ reduces in polynomial time to 
    $\text{CompRep}(D(\L \otimes \U))$ by Theorem~\ref{thm:comprep}.
  This yields a polynomial time algorithm for $\SMP(\A)$. 
\end{proof}

\smpcor*

It remains open whether every finite $2$-nilpotent Mal'cev algebra of finite type 
  has tractable subpower membership problem.
Ongoing research aims to use the techniques employed here to show that $\SMP(\L \otimes \U)$ is tractable
  for all $2$-nilpotent Mal'cev algebras $\L \otimes \U$.
This approach requires a better understanding of clonoids between finite abelian Mal'cev algebras of coprime order.
In recent work, Fioravanti, Kompatscher, and Rossi (\cite{FKR:CBVS}) show that clonoids from 
  (an algebra polynomially equivalent to) a finite vector space to (an algebra polynomially equivalent to) 
  a finite module of coprime order are all finitely generated and there are finitely many of them (cf. \cite{FKR:CBVS}~Theorem~49).
In light of this result as well as Theorem~\ref{thm:distributive} (\cite[Theorem 1.4]{MaWy:CBM})
  future work attempts to make progress towards the following conjecture.
\begin{conjecture}[cf. \cite{FKR:CBVS}~Conjecture 1]
  For $\U$ and $\L$ abelian Mal'cev algebras of coprime order, every $(\U,\L)$-clonoid is finitely generated
    and there are finitely many such clonoids.
\end{conjecture}
Settling this conjecture will likely prove beneficial to showing that $\SMP(\L \otimes \U) \in \textup{P}$ 
  for all such $\L$ and $\U$.

%%%%%%%%%%%%%%%  Acknowledgements  %%%%%%%%%%%%%%%%%%%%

\section*{Acknowledgments}

 The author thanks Peter Mayr for many helpful discussions that contributed to the results of this paper.

%%%%%%%%%%%%%%%          Bibliography           %%%%%%%%%%%%%%%

%\bibliographystyle{plain} 
%\bibliography{../../../biblio}

\begin{thebibliography}{10}

\bibitem{Berg:UA}
Clifford Bergman.
\newblock {\em Universal algebra}, volume 301 of {\em Pure and Applied
  Mathematics (Boca Raton)}.
\newblock CRC Press, Boca Raton, FL, 2012.
\newblock Fundamentals and selected topics.

\bibitem{BD:ASA}
Andrei Bulatov and V\'{\i}ctor Dalmau.
\newblock A simple algorithm for {M}al'tsev constraints.
\newblock {\em SIAM J. Comput.}, 36(1):16--27, 2006.

\bibitem{BMS:SMP}
Andrei Bulatov, Peter Mayr, and \'{A}gnes Szendrei.
\newblock The subpower membership problem for finite algebras with cube terms.
\newblock {\em Log. Methods Comput. Sci.}, 15(1):Paper No. 11, 48, 2019.

\bibitem{BS:UA}
Stanley Burris and H.~P. Sankappanavar.
\newblock {\em A course in universal algebra}, volume~78 of {\em Graduate Texts
  in Mathematics}.
\newblock Springer-Verlag, New York-Berlin, 1981.

\bibitem{FKR:CBVS}
Stefano Fioravanti, Michael Kompatscher, and Bernardo Rossi.
\newblock Clonoids between vector spaces.
\newblock {\em International Journal of Unpublished Manuscripts}, 2025.

\bibitem{FM:CTC}
R.~Freese and R.~N. McKenzie.
\newblock {\em Commutator Theory for Congruence Modular Varieties}, volume 125
  of {\em London Math. Soc. Lecture Note Ser.}
\newblock Cambridge University Press, 1987.
\newblock Available from
  \verb+http://math.hawaii.edu/~ralph/Commutator/comm.pdf+.

\bibitem{ALV:III}
Ralph~S. Freese, Ralph~N. McKenzie, George~F. McNulty, and Walter~F. Taylor.
\newblock {\em Algebras, lattices, varieties. {V}ol. {III}}, volume 269 of {\em
  Mathematical Surveys and Monographs}.
\newblock American Mathematical Society, Providence, RI, 2022.

\bibitem{Id:CCMO}
P.~Idziak.
\newblock Clones containing {M}al'tsev operations.
\newblock {\em International Journal of Algebra and Computation},
  9(2):213--226, 1999.

\bibitem{IMMVW:TL}
P.~Idziak, P.~Markovi\'{c}, R.~McKenzie, M.~Valeriote, and R.~Willard.
\newblock Tractability and learnability arising from algebras with few
  subpowers.
\newblock {\em SIAM J. Comput.}, 39(7):3023--3037, 2010.

\bibitem{Kom:SMP}
Michael Kompatscher.
\newblock The subpower membership problem of 2-nilpotent algebras.
\newblock In {\em 41st {I}nternational {S}ymposium on {T}heoretical {A}spects
  of {C}omputer {S}cience}, volume 289 of {\em LIPIcs. Leibniz Int. Proc.
  Inform.}, pages Paper No. 46, 17. Schloss Dagstuhl. Leibniz-Zent. Inform.,
  Wadern, 2024.

\bibitem{Ko:EXP}
Marcin Kozik.
\newblock A finite set of functions with an {EXPTIME}-complete composition
  problem.
\newblock {\em Theoret. Comput. Sci.}, 407(1-3):330--341, 2008.

\bibitem{Mal:GTAS}
A.~I. Mal'cev.
\newblock On the general theory of algebraic systems.
\newblock {\em Mat. Sb. (N.S.)}, 35(77):3--20, 1954.

\bibitem{Ma:SMP}
Peter Mayr.
\newblock The subpower membership problem for {M}al'cev algebras.
\newblock {\em Internat. J. Algebra Comput.}, 22(7):1250075, 23, 2012.

\bibitem{Ma:VLL}
Peter Mayr.
\newblock Vaughan-{L}ee's nilpotent loop of size 12 is finitely based.
\newblock {\em Algebra Universalis}, 85(1):Paper No. 2, 12, 2024.

\bibitem{MaWy:CBM}
Peter Mayr and Patrick Wynne.
\newblock Clonoids between modules.
\newblock {\em Internat. J. Algebra Comput.}, 34(4):543--570, 2024.

\bibitem{Pi:GTMF}
N.~Pippenger.
\newblock Galois theory for minors of finite functions.
\newblock {\em Discrete Math.}, 254(1-3):405--419, 2002.

\bibitem{Si:CMPG}
Charles~C. Sims.
\newblock Computational methods in the study of permutation groups.
\newblock In {\em Computational {P}roblems in {A}bstract {A}lgebra ({P}roc.
  {C}onf., {O}xford, 1967)}, pages 169--183. Pergamon, Oxford-New York-Toronto,
  Ont., 1970.

\bibitem{Va:NPV}
M.~R. Vaughan-Lee.
\newblock Nilpotence in permutable varieties.
\newblock In {\em Universal algebra and lattice theory ({P}uebla, 1982)},
  volume 1004 of {\em Lecture Notes in Math.}, pages 293--308. Springer,
  Berlin, 1983.

\bibitem{Wi:SMP}
Ross Willard.
\newblock Four unsolved problems in congruence permutable varieties.
\newblock Talk at the Conference on Order, Algebra, and Logics, 2007.
\newblock Available at
  \url{https://www.math.uwaterloo.ca/~rdwillar/documents/Slides/willard_nashville_2007_slides.pdf}.

\bibitem{Wy:CNMA}
Patrick Wynne.
\newblock {\em Clonoids and {N}ilpotent {M}al'cev {A}lgebras}.
\newblock ProQuest LLC, Ann Arbor, MI, 2024.
\newblock Thesis (Ph.D.)--University of Colorado Boulder.

\end{thebibliography}

% Bibliography added 8/19/2026 %

%
%
%

\end{document}